\newtheorem{theorem}{Theorem}[section]
\newtheorem{lemma}[theorem]{Lemma}
\newtheorem{corollary}[theorem]{Corollary}
\theoremstyle{definition}
\newtheorem{definition}[theorem]{Definition}
\theoremstyle{remark}
\newtheorem{remark}[theorem]{Remark}
\theoremstyle{definition}
\newtheorem{exmp}{Example}[section]
\tikzset{curve/.style={settings={#1},to path={(\tikztostart)
    .. controls ($(\tikztostart)!\pv{pos}!(\tikztotarget)!\pv{height}!270:(\tikztotarget)$)
    and ($(\tikztostart)!1-\pv{pos}!(\tikztotarget)!\pv{height}!270:(\tikztotarget)$)
    .. (\tikztotarget)\tikztonodes}},
    settings/.code={\tikzset{quiver/.cd,#1}
        \def\pv##1{\pgfkeysvalueof{/tikz/quiver/##1}}},
    quiver/.cd,pos/.initial=0.35,height/.initial=0}
\definecolor{myred}{RGB}{214,92,92}
\definecolor{myblue}{RGB}{59,62,247}
\definecolor{mygreen}{RGB}{92,214,92}
\newcommand{\tmpx}{}
\newcommand{\BEA}{\begin{eqnarray*}}
\newcommand{\EEA}{\end{eqnarray*}}
\newcommand{\BE}{\begin{enumerate}}
\newcommand{\EE}{\end{enumerate}}
\newcommand{\bmat}{\begin{pmatrix}}
\newcommand{\emat}{\end{pmatrix}}
\newcommand{\bp}{\begin{proof}}
\newcommand{\epp}{\end{proof}}
\newcommand{\B}{\mathcal{C}}
\newcommand{\Z}{\mathbb{Z}}
\newcommand{\F}{\mathbbm{F}}
\newcommand{\kt}{\mathbbm{k}^{\times}}
\newcommand{\C}{\mathbbm{C}}
\newcommand{\chit}{\Tilde{\chi}}
\newcommand{\psit}{\Tilde{\psi}}
\newcommand{\psitq}{\Tilde{\psi}^q}
\newcommand{\eqbr}{\text{EqBr}}
\newcommand{\brpic}{\text{BrPic}}
\newcommand{\inv}{^{-1}}
\newcommand{\Hom}{\text{Hom}}
\newcommand{\obj}{\text{Obj}}
\newcommand{\vek}{\text{Vec}}
\newcommand{\Aut}{\text{2-Aut}_\otimes}
\newcommand{\aut}{\text{Aut}_\otimes}
\newcommand{\id}{\text{id}}
\newcommand{\Gr}{\text{-Gr}}
\newcommand{\BK}{\mathcal{C}\rtimes_\omega K}
\newcommand{\q}{\widehat{q}}
\newcommand{\qr}{\widehat{qr}}
\newcommand{\ar}{\widehat{r}}
\newcommand{\s}{\widehat{s}}
\newcommand{\tee}{\widehat{t}}
\tikzset{
  trim node/.default=1cm,
  trim node/.style={
    overlay,
    append after command={
      ([xshift={+#1}]\tikzlastnode.north west)
      ([xshift={+-#1}]\tikzlastnode.south east)}},
  down and trim/.default=1cm,
  down and trim/.style={
    yshift=-(\pgfmatrixcurrentcolumn-1)*1.5\baselineskip,
    trim node={#1}},
  downup and trim/.default=1cm,
  downup and trim/.style={
    yshift=iseven(\pgfmatrixcurrentcolumn) ? -1.5\baselineskip : 0pt,
    trim node={#1}},
  -|/.style={to path={-|(\tikztotarget)\tikztonodes}},
  |-/.style={to path={|-(\tikztotarget)\tikztonodes}},
  -| sl/.style={-|, xslant=-1},
  |- sl/.style={|-, xslant= 1},
  center picture/.style={
    trim left=(current bounding box.center),
    trim right=(current bounding box.center)}}
\title{Anomalous Actions of Groups on Tensor Categories}
\author{Noah Lanier}
\date{\today}
\begin{document}

\maketitle

\thispagestyle{empty}
\begin{abstract} 
 For a group $G$ and a 4-cocycle $\pi\in Z^4(G,\kt)$, a $\pi$-anomalous action of $G$ on a linear monoidal category $\mathcal{C}$ is a linear monoidal 2-functor between 3-groups $3\Gr(G,\pi)\rightarrow \Aut(\mathcal{C})$ where the latter denotes the 3-group of autoequivalences of $\mathcal{C}$. Given $G$ and $\pi$, we provide a method of constructing anomalous actions of $3\Gr(G,\pi)$ on a tensor categories.
\end{abstract}

\section{Introduction}
\
For a tensor category\footnote{i.e. linear, monoidal category} $\mathcal{C}$, the 2-group $\aut(\mathcal{C})$ is a standard way to capture symmetries of the category. However, if we think of $\mathcal{C}$ as a 2-category with one object, there is a higher group of symmetries $\Aut(\mathcal{C})$ which is a linear 3-group. Given a 4-cocycle $\pi\in Z^4(G,\kt)$, we can define a $\pi$\textit{-anomalous action} of $G$ on $\mathcal{C}$ as a 3-functor from 
$3\Gr(G,\pi)$ to $\Aut(\mathcal{C})$. Thus, tensor categories have a natural notion of anomalous symmetries, generalizing the idea of anomalous symmetries on algebras from \cite{Jones_2021}. The 4-cocycle $\pi$ appears as obstruction to problems in extension theory and topological phases of matter. A natural question to ask is: given a group and a 4-cocycle how can we construct $\pi$-anomalous actions on tensor categories?

From general theory, it is known that for any group $G$ and $\pi\in Z^4(G,\kt)$ there exists an anomalous action on some fusion category \cite{Johnson_Freyd_2023}. However, we want to give a recipe for a construction that produces explicit anomalous actions when given explicit algebraic data. In this paper, we are generalizing the techniques and methodologies from \cite{Jones_2021}, which studied this problem one categorical level down on \textit{associative algebras}\footnote{Jones' paper was written in terms of $C^*$-algebras, but only needed the associative structure}, to this new setting that will allow us to construct explicit anomalous actions on \textit{tensor categories}. We now introduce the main theorem:

\bigskip

\begin{theorem}\label{theorem1}
Suppose we have the following data:
\begin{itemize}
    \item A group $Q$ and $[\pi]\in H^4(Q,\kt)$, with a normalized representative $\pi\in Z^4(Q,\kt)$.
    \item A group $G$ and a surjective homomorphism $\rho:G\rightarrow Q$ with kernel $K$.
    \item A normalized cochain $\omega\in C^3(G,\kt)$ such that $d\omega=\rho^*(\pi)$.
    \item An action of $G$ on the tensor category $\B$.
\end{itemize}
Then there exists a $\pi$-anomalous action of $Q$ on the twisted crossed product tensor category $\B\rtimes_{\omega} K$ where $\omega\in Z^3(K,\kt)$ is the restriction of $\omega$ to $K$.    
\end{theorem}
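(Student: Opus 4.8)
The plan is to build the anomalous action as an explicit linear monoidal 2-functor $3\Gr(Q,\pi)\rightarrow \Aut(\B\rtimes_\omega K)$ by lifting everything through $\rho$. First I would recall that a $\pi$-anomalous action amounts to: for each $q\in Q$ a monoidal autoequivalence $F_q$ of $\B\rtimes_\omega K$; for each pair $q_1,q_2$ a monoidal natural isomorphism $F_{q_1}\circ F_{q_2}\Rightarrow F_{q_1q_2}$; and for each triple a modification between the two ways of reassociating, all subject to a coherence pentagon that fails to commute precisely by the scalar $\pi(q_1,q_2,q_3,q_4)$. Since $\rho:G\to Q$ is surjective, I would fix a set-theoretic section $s:Q\to G$ (with $s(e)=e$), so that every $q$ is represented by $s(q)\in G$ and the failure of $s$ to be a homomorphism is measured by the $2$-cochain $\mu(q_1,q_2):=s(q_1)s(q_2)s(q_1q_2)^{-1}\in K$.

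**Transporting the $G$-action to the crossed product.**

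The key step is to produce, from the given action of $G$ on $\B$, a ``projective'' action of $Q$ on $\B\rtimes_\omega K$. The action of $G$ on $\B$ restricts to an action of $K$; but $\B\rtimes_\omega K$ is built exactly so that the $K$-action becomes \emph{inner} (implemented by the grouplike objects adjoined in the crossed product), twisted by the $3$-cocycle $\omega|_K$. So for each $q$, I would let $F_q$ be the autoequivalence of $\B\rtimes_\omega K$ induced by $s(q)$ acting on $\B$ together with its natural action on the $K$-graded pieces (permuting the grading components by conjugation $k\mapsto s(q)ks(q)^{-1}$). Composing $F_{q_1}$ and $F_{q_2}$ differs from $F_{q_1q_2}$ by the action of $\mu(q_1,q_2)\in K$, which is inner in the crossed product; I would use the grouplike object labelled by $\mu(q_1,q_2)$ to build the natural isomorphism $F_{q_1}\circ F_{q_2}\Rightarrow F_{q_1q_2}$. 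This is the categorical analogue of Jones' trick of absorbing the cocycle failure into unitaries coming from the crossed product algebra.

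**Computing the anomaly.**

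The heart of the argument is then a cohomological bookkeeping computation. Associativity of these natural isomorphisms at a triple $(q_1,q_2,q_3)$ involves the $2$-cochain $\mu$ and the associator constraints in $\B\rtimes_\omega K$, which are governed by $\omega$. I would show that the obstruction to strict coherence is exactly $\omega$ evaluated on a ``group-cocycle'' combination of the $\mu$'s, together with a genuine $d\omega$ term. Because $d\omega=\rho^*\pi$, the $d\omega$ contribution collapses to $\pi(q_1,q_2,q_3,q_4)$ pulled back along $\rho\circ s=\mathrm{id}_Q$, and the remaining $\omega(\mu,\mu,\mu)$-type terms must be absorbed into the choice of the modification (the $3$-morphism data of the $2$-functor) — this is where normalizedness of $\pi$ and of $\omega$ is used to kill boundary terms. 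The upshot is a $2$-functor whose único failure of coherence is the scalar $\pi$, i.e.\ a $\pi$-anomalous action of $Q$ on $\B\rtimes_\omega K$.

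**Main obstacle.**

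I expect the main difficulty to be precisely the last step: verifying that the leftover $\omega$-terms arising from the non-associativity of $\mu$ assemble into a genuine coboundary that can be trivialized by a consistent choice of modifications, rather than contributing an extra cohomological summand to the anomaly. Concretely, one must check a large pentagon/hexagon diagram in $\Aut(\B\rtimes_\omega K)$ and match it term-by-term against the cocycle identity $d\omega=\rho^*\pi$; keeping track of which $\omega$-evaluations are on elements of $K$ versus how the section $s$ interacts with the $G$-action will be the delicate part. Everything else — that $F_q$ is monoidal, that the crossed product is well-defined, that the grouplikes implement the $K$-action — should be routine given the construction of $\B\rtimes_\omega K$ and the hypotheses.
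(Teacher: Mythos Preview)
Your outline is correct and matches the paper's approach almost exactly: choose a set-theoretic section $s:Q\to G$, define $F_q$ via the $G$-action of $s(q)$ on $\B$ together with conjugation on the $K$-grading, implement $F_{q_1}F_{q_2}\Rightarrow F_{q_1q_2}$ by the grouplike $[I,\mu(q_1,q_2)]$, and check the pentagon. The one place your sketch is slightly off is the description of the modification data: in the paper the $3$-morphism is simply the scalar $\omega\bigl(s(q_1),s(q_2),s(q_3)\bigr)$ evaluated on the \emph{lifts}, not on the $\mu$'s, and the final coherence equation is then literally $d\omega\bigl(s(q_1),s(q_2),s(q_3),s(q_4)\bigr)=\pi(q_1,q_2,q_3,q_4)$. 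The proliferation of ``$\omega(\mu,\ldots)$-type'' terms you anticipate as the main obstacle is handled one level earlier --- in verifying that $F_q$ is monoidal, that the tensorator isomorphisms are pseudonatural, and that the modification axiom holds --- by interpreting every such scalar as an associator in $\vek_K^\omega$ and invoking Mac~Lane's coherence theorem; nothing survives to the final pentagon except the clean $d\omega=\rho^*\pi$ identity.
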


\bigskip

In the literature of extension theory of fusion categories, obstruction theory, gauging of fusion categories and more, there exists a class $o_4\in H^4(G,\kt)$ known as an ($H^4$-)obstruction \cite{Barkeshli_2019,cmp/1104180750,etingof2009fusioncategorieshomotopytheory,Kong_2020,Thorngren_2015,PhysRevX.8.031048}. The typical question to ask in these areas is whether or not the obstruction vanishes. In \cite{etingof2009fusioncategorieshomotopytheory}, $G$-extensions of fusion categories $\mathcal{C}$ are parametrized by 3-homomorphisms\footnote{also known as trihomomorphisms, 3-group homomorphisms, 3-functors, etc.} $\underline{\underline{\rho}}:\underline{\underline{G}}\rightarrow \underline{\underline{\brpic}}(\mathcal{C})$. The question of whether such a 3-homomorphism exists, starting with a group homomorphism $\rho:G\rightarrow \brpic(\mathcal{C})$, are questions whether $\rho$ lifts to a 2-homomorphism $\underline{\rho}$ and whether the 2-homomorphism $\underline{\rho}$ (if it exists) lifts to a 3-homomorphism $\underline{\underline{\rho}}$. The questions of whether the homomorphisms lift are the same as asking whether obstructions $o_3(\rho)\in H^3(G,\text{Aut}(1_{\mathcal{C}}))$ and $o_4(\underline{\rho})\in H^4(G,\kt)$ vanish. 

Even in simple settings, the obstruction $o_4$ is notoriously hard to compute (see \cite{Cui_2016} for a formula in a certain setting). 
We want to reverse these problems and instead of considering $o_4$ an obstruction we will start with $[\pi]\in H^4(G,\kt)$, consider it as extra data and refer to it as an anomaly. Similarly to the classical extension theory for fusion categories, we want to build 3-homomorphisms, but now using $\pi$ as extra data, we want 3-homomorphisms $3\Gr(G,\pi)\rightarrow \Aut(\mathcal{C})(\hookrightarrow\underline{\underline{\brpic}}(\mathcal{C}))$ where $3\Gr(G,\pi)$ is the 3-group\footnote{also known as categorical 2-group} similar to $\underline{\underline{G}}$, now where the pentagon equation is only satisfied up to $\pi$ and is linear in the sense that the top level of morphisms form a torsor over $\kt$. 

In \cite{galindo2017categoricalfermionicactionsminimal}, it was shown that for a braided fusion category $\mathcal{B}$ whose Müger (or symmetric) center is \textit{Tannakian} (i.e. $\mathcal{Z}_2(\mathcal{B})\cong \text{Rep}(G)$ for some finite group $G$) the obstruction to emitting a minimal nondegenerate extension lives in $H^4(G,\kt)$. Moreover, it is known that every class in $H^4(G,\kt)$ can be realized as such an obstruction for some braided fusion category \cite{Johnson_Freyd_2023}.
In (2+1)-D topological phases of matter, extending fusion categories by (finite) groups $G$ (what are called $G$\textit{-crossed braided fusion categories}) is an integral step in the process of gauging, which describes  the process of promoting a (finite) group of global symmetries to local symmetries \cite{Barkeshli_2019,Cui_2016,Gannon_2019}. Thus, the obstruction $o_4$ to $G$-extensions of fusion categories may also be seen as an obstruction to gauging symmetries of topological phases of matter. 

If one is interested in understanding topological (or gapped) boundaries of Bosonic (3+1)-D topological orders there is a general correspondence between such topological orders and Morita equivalences of two-algebra objects in $2\text{-}\vek(G,\pi)$ \cite{Lan_2018}. We also have a general correspondence between two-algebra objects in $2\text{-}\vek(G,\pi)$ and 3-homomorphisms  $3\Gr(G,\pi)\rightarrow \underline{\underline{\brpic}}(\mathcal{C})$ where $\mathcal{C}$ is fusion \cite{douglas2018fusion2categoriesstatesuminvariant,décoppet2021weakfusion2categories,D_coppet_2024,décoppet2024classificationfusion2categories}.
Our recipe for constructions of 3-homomorphisms should thus help produce examples of topological boundaries of (3+1)-D topological orders.

\textbf{Outline.} In section 2 we review preliminary definitions and introduce terminology and notation that will be used throughout. Section 3 is devoted to proving the main theorem (\ref{theorem1}.) We will separate the work into a series of lemmas. We note that, because we are working with higher categories, that the diagrams that we need to commute can be quite large and so, when we find it may be helpful, we will use a color-coding for such diagrams. Finally, section 4 is where we investigate some concrete examples of anomalous actions and give guidelines for some convenient methods of producing more examples in the future. 

\textbf{Acknowledgments.} I would like to thank Corey Jones for the many conversations had in the completion of this paper. The main result is essentially a categorification of theorem 3.1 in \cite{Jones_2021} which was inspired by the work of Vaughan Jones. I never had the pleasure of meeting Vaughan, though I am glad an extension of his ideas lives on in this work. This research was supported by NSF grant DMS-2247202.

\newpage
\section{Preliminaries}
We will assume some level of knowledge of tensor categories. For a comprehensive overview of tensor categories see \cite{etingof2016tensor}. The following section is primarily for setting conventions used throughout the paper.

\subsection{2-groups}

Our monoidal categories will be \textit{strictly unital}, which means $1_{\mathcal{C}}\otimes a=a\otimes 1_{\mathcal{C}}=a$ for all $a\in \mathcal{C}$ and $l_a=\id_a=r_a$. 
\begin{definition}
    A \textbf{2-group} (or categorical-group) $\mathcal{G}$ is a monoidal category where every object has a weak inverse and every morphism is an isomorphism.
\end{definition}

\begin{exmp}
    A simple example of a 2-group is the discrete 2-group\footnote{This is often referred to as $BG$, the \textit{delooping} of $G$ \cite{lurie2008highertopostheory}} $\underline{G}$ associated to a group $G$. The objects of $\underline{G}$ are the elements of $G$, the arrows are only the identities, and the tensor product is the operation in $G$.
\end{exmp}

\begin{exmp}
    Another important example of a 2-group that we will see in this paper is the 2-group $\aut(\mathcal{C})$ for a given tensor category $\mathcal{C}$. The objects are given by monoidal autoequivalences of $\mathcal{C}$ and the morphisms are natural isomorphisms. The tensor product is given by composition of functors, the tensor unit is the identity functor $1_{\mathcal{C}}$ and the associator and unitors are trivial.
\end{exmp}

\subsection{Group cohomology}
Group cohomology plays an important role in higher category theory \cite{baez1997introductionncategories,baez2007lecturesncategoriescohomology}. We will include general definitions
to set conventions. For groups of functions valued in
$\kt$, we will use multiplicative notation.

Let $G$ be a group and like $M$ be a $G$-module. Define
\[C^k(G,M):=\{f:G^{\times k}\rightarrow M\,|\,f(g_1,\cdots,g_k)=0\text{ if any }g_i=1\}.\]
The elements of $C^k(G,M)$ are called \textit{normalized} cochains. Define the differential\footnote{This definition of $d$ may differ by a sign from other conventions} $d^k:C^k(G,M)\rightarrow C^{k+1}(G,M)$ by
\begin{align*}
    d^kf(g_1,\cdots,g_{k+1})&=g_1(f(g_2,\cdots,g_{k+1}))\\
    &+\sum_{i=1}^k(-1)^{i}f(g_1,\cdots,g_ig_{i+1},\cdots,g_{k+1})\\
    &+(-1)^{k+1}f(g_1,\cdots,g_k).
\end{align*}
We note that $d^{k+1}\circ d^k=0$. We often drop mention of the superscript, assuming it is understood from context. Then set $Z^k(G,M):=\ker(d^k)$, $B^k(G,M):=\text{Im}(d^{k-1})$ and $H^k(G,M):=Z^k(G,M)/B^k(G,M)$. The $Z^k$ are called \textit{normalized cocycles}, the $B^k(G,M)$ are called \textit{normalized coboundaries} and $H^k(G,M)$ is called the $k^{\text{th}}$ cohomology group of $G$ with coefficients in $M$. 

A \textit{cochain complex} is defined to be the chain
\[\cdots\xrightarrow{d} C^{k-1}(G,M)\xrightarrow{d}C^k(G,M)\xrightarrow{d} C^{k+1}(G,M)\xrightarrow{d} \cdots\]
The un-normalized versions of the above are defined almost precisely the same way, except the definition of
the group of cochains removes the condition that $f(g_1,\cdots,g_k)=0$ if $g_i=1$. The inclusion of the normalized chain complex into the un-normalized version induces an isomorphism on cohomology
groups. The equivalence between these approaches is detailed in (\cite{weibel1994introduction}, Section 6.5). The normalized version of group cohomology is certainly more convenient for applications “in nature”, e.g. in
group extensions and higher category theory. When we say cocycle or coboundary in the paper,
we will always mean a normalized cocycle or coboundary. 

Group cohomology will play an essential role in our story, especially cohomology for finite
groups. Standard references for group cohomology include \cite{brown2012cohomology,weibel1994introduction}. Given a homomorphism $\rho:G\rightarrow H$, then any $G$-module $M$ is endowed with the structure of an $H$-module via $h\cdot m:=\rho(h)\cdot m$. The $\rho$ induces a homomorphism of cochain complexes called the \textit{pullback} \[\rho^*:C^*(H,M)\rightarrow C^*(G,M),\]
\[\rho^*(f)(h_1,\cdots,h_k):=f(\rho(h_1),\cdots,\rho(h_k)).\]
The pullback is a morphism of cochain complexes, hence induce homomorpisms at the level of cocycles, coboundaries and cohomology.

Given a short exact sequence of groups
\[0\rightarrow L\rightarrow M\rightarrow N\rightarrow 0,\]
we have the \textit{long exact sequence in cohomology} 
\[\cdots\rightarrow H^{k-1}(G,N)\rightarrow H^{k}(G,L)\rightarrow H^k(G,M)\rightarrow H^k(G,N)\rightarrow H^{k+1}(G,L)\rightarrow \cdots\]
where the arrows between cohomology groups of the same degree are pushforwards. The degree shifting maps $H^{k-1}(G,N)\rightarrow H^k(G,L)$ are called \textit{connecting homomorphisms}.

A \textit{cup product} is a map $H^k(G,\F)\times H^m(G,\F)\rightarrow H^{k+m}(G,\F)$ defined by $f\in C^k(G,\F)$, $g\in C^m(G,\F)$
\[fg(x_1,\dots,x_{k+m}):=f(x_1,\dots,x_k)\cdot g(x_{k+1},\dots,x_{k+m}).\]
Note that the operation $\cdot$ above comes from the ring structure of $\F$ and therefore in order for a cup product to be well-defined $\F$ must have a ring structure. Cup products will be helpful in the construction of explicit examples of anomalous actions.

\newpage
\subsection{Group actions on tensor categories}
For group actions on tensor categories we will be using some combination of notation from \cite{bernaschini2017groupactions2categories,galindo2015crossed,tambara2001semi}. An important convention to single out is that our associators will associate right to left. We include definitions below for reference of notational conventions. 
\begin{definition}
    An action of a group $G$ on a tensor category $\B$ consists of data
    \begin{itemize}
        \item Monoidal autoequivalences $(\sigma_*,\psi^{\sigma}):\B\rightarrow\B$ for all $\sigma\in G$,
        \item nautral isomorphisms $\chi_{\sigma,\tau}:\sigma_*\circ\tau_*\rightarrow(\sigma\tau)_*$,
        \item an isomorphism $\nu:e_*\rightarrow\id_{\B}$,
    \end{itemize}
    which makes the following diagram commute for all $\sigma,\tau,\rho\in G$ and $V\in \B$. 
\begin{equation}\label{eq1.1}
\begin{tikzcd}
	{\sigma_*\tau_*\rho_* V} && {(\sigma\tau)_*\rho_* V} \\
	\\
	{\sigma_*(\tau\rho)_* V} && {(\sigma\tau\rho)_* V}
	\arrow["{\sigma_*(\chi_{\tau,\rho}(V))}"', from=1-1, to=3-1]
	\arrow["{\chi_{\sigma,\tau\rho}(V)}"', from=3-1, to=3-3]
	\arrow["{\chi_{\sigma\tau,\rho}(V)}", from=1-3, to=3-3]
	\arrow["{\chi_{\sigma,\tau}(\rho_* V)}", from=1-1, to=1-3]
\end{tikzcd}    
\end{equation}
By definition of a monoidal functor, the above $\sigma_*$ consist of 
\begin{itemize}
    \item a functor $\sigma_*:\B\rightarrow\B$,
    \item natural isomorphisms $\psi^\sigma_{V,W}:\sigma_* V\otimes \sigma_* W\rightarrow\sigma_*(V\otimes W)$ for all $V,W\in \B$,
\end{itemize}
making the following diagram commutative for all $V,W,Z\in \B$.
\begin{equation}\label{eq1.2}
\begin{tikzcd}
	{\sigma_* V\otimes(\sigma_* W\otimes \sigma_* Z)} && {(\sigma_* V\otimes \sigma_* W)\otimes \sigma_* Z} \\
	\\
	{\sigma_* V\otimes\sigma_*(W\otimes Z)} && {\sigma_*(V\otimes W)\otimes \sigma_* Z} \\
	\\
	{\sigma_*(V\otimes(W\otimes Z))} && {\sigma_*((V\otimes W)\otimes Z)}
	\arrow["{\alpha_{\sigma_* V,\sigma_* W,\sigma_* Z}}", from=1-1, to=1-3]
	\arrow["{\text{id}_{\sigma_* V}\otimes\psi^{\sigma}_{W,Z}}"', from=1-1, to=3-1]
	\arrow["{\psi^{\sigma}_{V,W\otimes Z}}"', from=3-1, to=5-1]
	\arrow["{\sigma_*(\alpha_{V,W,Z})}"', from=5-1, to=5-3]
	\arrow["{\psi^{\sigma}_{V,W}\otimes\text{id}_{\sigma_* Z}}", from=1-3, to=3-3]
	\arrow["{\psi^{\sigma}_{V\otimes W, Z}}", from=3-3, to=5-3]
\end{tikzcd}
\end{equation}
The requirement that $\chi_{\sigma,\tau}$ is a transformation of monoidal functors means that the following diagram is commutative for all $V,W\in \B$.
\begin{equation}\label{eq1.3}
    \begin{tikzcd}
	{\sigma_*\tau_* V\otimes \sigma_*\tau_* W} &&& {(\sigma\tau)_* V\otimes (\sigma\tau)_* W} \\
	{\sigma_*(\tau_*V\otimes\tau_* W)} \\
	{\sigma_*\tau_*(V\otimes W)} &&& {(\sigma\tau)_*(V\otimes W)}
	\arrow["{\psi^{\sigma\tau}_{V,W}}", from=1-4, to=3-4]
	\arrow["{\chi_{\sigma,\tau}(V)\otimes\chi_{\sigma,\tau}(W)}", from=1-1, to=1-4]
	\arrow["{\psi^{\sigma}_{\tau_* V,\tau_* W}}"', from=1-1, to=2-1]
	\arrow["{\sigma_*(\psi^{\tau}_{V,W})}"', from=2-1, to=3-1]
	\arrow["{\chi_{\sigma,\tau}(V\otimes W)}"', from=3-1, to=3-4]
\end{tikzcd}
\end{equation}

We will not concern ourselves with the left and right unitor diagrams as our unitors are always trivial. It is often convenient to refer to a tensor category with $G$ action as a \textit{$G$-tensor category}. In other words, an action of $G$ on a tensor category $\B$ is the same data as a monoidal functor from the 2-group $\underline{G}$ to the 2-group $\aut(\B)$
\[-_*:\underline{G}\rightarrow\aut(\B).\]
\end{definition}
\begin{definition}
   Given a group $G$ and a $G$-tensor category $\B$, we shall define a $\bm{G}$\textbf{-crossed product tensor category} associated to this action, denoted $\B\rtimes G$. We set $\B\rtimes G=\bigoplus_\sigma \B_\sigma$ as a semisimple category, where $\B_\sigma=\B$. We denote by $[V,\sigma]$ the object $V\in \B_\sigma$ and a morphism from $[V_\sigma,\sigma]$ to $[W_\sigma,\sigma]$ is expressed as $[f_\sigma,\sigma]$ with $f_\sigma:V_\sigma\rightarrow W_\sigma$ a morphism in $\B$.

The tensor product $\cdot:\B\rtimes G \times \B\rtimes G\rightarrow \B\rtimes G$ is defined by
\begin{equation}\label{eq1.4}
\begin{split}
    [V,\sigma]\cdot [W,\tau]&=[V\otimes \sigma_* W,\sigma\tau] \quad \text{for objects, and}\\
    [f,\sigma]\cdot [g,\tau]&=[f\otimes \sigma_* g,\sigma\tau]\quad \text{for morphisms.}
\end{split}
\end{equation}
The unit object is $(1_\B,e)$. The associativity is given by 
\begin{equation}\label{eq1.5}
\begin{tikzcd}
	{[V,\sigma]\cdot([W,\tau]\cdot [Z,\rho])} & {[V,\sigma]\cdot[W\otimes\tau_* Z,\tau\rho]} & {[V\otimes(\sigma_*(W\otimes\tau_*Z)),\sigma\tau\rho]} \\
	\\
	{([V,\sigma]\cdot[W,\tau])\cdot [Z,\rho]} & {[V\otimes \sigma_* W,\sigma\tau]\cdot [Z,\rho]} & {[(V\otimes \sigma_*W)\otimes(\sigma\tau)_*Z,\sigma\tau\rho]}
	\arrow["{\alpha_{[V,\sigma],[W,\tau],[Z,\rho]}}"', from=1-1, to=3-1]
	\arrow["{[\alpha^{\mathcal{C}\rtimes G}(V,\sigma,W,\tau,Z,\rho),\sigma\tau\rho]}"', from=1-3, to=3-3]
	\arrow["{=}"{description}, draw=none, from=1-1, to=1-2]
	\arrow["{=}"{description}, draw=none, from=1-2, to=1-3]
	\arrow["{=}"{description}, draw=none, from=3-1, to=3-2]
	\arrow["{=}"{description}, draw=none, from=3-2, to=3-3]
\end{tikzcd}
\end{equation}
where $\alpha^{\B\rtimes G}(V,\sigma,W,\tau,Z,\rho)$ is the composite
\begin{equation}\label{eq1.6}
\begin{tikzcd}
	{V\otimes\sigma_*(W\otimes\tau_* Z)} \\
	\\
	{V\otimes(\sigma_*W\otimes\sigma_*\tau_* Z)} \\
	\\
	{V\otimes(\sigma_*W\otimes(\sigma\tau)_* Z)} \\
	\\
	{(V\otimes\sigma_*W)\otimes(\sigma\tau)_* Z}
	\arrow["{\text{id}_V\otimes (\psi^{\sigma_*}_{W,\tau_* Z})\inv}"', from=1-1, to=3-1]
	\arrow["{\text{id}_{V\otimes\sigma_* W}\otimes\chi_{\sigma,\tau}}"', from=3-1, to=5-1]
	\arrow["{\alpha^{\mathcal{C}}(V,\sigma_* W,(\sigma\tau)_* Z)}"', from=5-1, to=7-1]
\end{tikzcd}
\end{equation}
Therefore 
\begin{equation}\label{eq1.7}
    \alpha_{[V,\sigma],[W,\tau],[Z,\rho]}=[\alpha^{\B}(V,\sigma_* W,(\sigma\tau)_* Z),\sigma\tau\rho]\circ [\text{id}_{V\otimes\sigma_* W}\otimes\chi_{\sigma,\tau},\sigma\tau\rho]\circ [\text{id}_V\otimes \psi^{\sigma_*}_{W,\tau_* Z},\sigma\tau\rho].
\end{equation}

The associativity constraint is given by the commutativity of the following diagram (see \cite{galindo2015crossed},\cite{tambara2001semi})
\[\begin{tikzcd}[scale cd=0.95,column sep = 0.75em, center picture]
	{[V,\sigma]\cdot([W,\tau]\cdot([Z,\rho]\cdot[U,\gamma]))} && {([V,\sigma]\cdot[W,\tau])\cdot([Z,\rho]\cdot[U,\gamma])} \\
	\\
	{[V,\sigma]\cdot(([W,\tau]\cdot[Z,\rho])\cdot[U,\gamma])} && {(([V,\sigma]\cdot[W,\tau])\cdot[Z,\rho])\cdot[U,\gamma]} \\
	\\
	& {([V,\sigma]\cdot([W,\tau]\cdot[Z,\rho]))\cdot[U,\gamma]}
	\arrow["{\text{id}_{[V,\sigma]}\cdot\alpha_{[W,\tau],[Z,\rho],[U,\gamma]}}"', from=1-1, to=3-1]
	\arrow["{\alpha_{[V,\sigma],[W,\tau],[Z,\rho]\cdot[U,\gamma]}}", from=1-1, to=1-3]
	\arrow["{\alpha_{[V,\sigma]\cdot[W,\tau],[Z,\rho],[U,\gamma]}}", from=1-3, to=3-3]
	\arrow["{\alpha_{[V,\sigma],[W,\tau]\cdot[Z,\rho],[U,\gamma]}}"', from=3-1, to=5-2]
	\arrow["{\alpha_{[V,\sigma],[W,\tau],[Z,\rho]}\cdot\text{id}_{[U,\gamma]}}"', from=5-2, to=3-3]
\end{tikzcd}\]

\begin{remark}\label{rem1}
    From now on we shall denote $[V]:=[V,e]$ and $[\sigma]:=[I,\sigma]$, for all $V\in \B$ and $\sigma\in G$ (similarly for morphisms). Note that $[V]\cdot [\sigma]=[V,\sigma]$ and $[\sigma]\cdot[V]=[\sigma_*V,\sigma]$ for all $V\in \B$ and $\sigma\in G$. We also note that every object of $\B\rtimes G$ is a direct sum of tensor products of $[V],[\sigma]$ for $V\in \B$ and $\sigma\in G$ and every morphism is a direct sum of tensor products of $[f],[\sigma]$ for some $f:V\rightarrow W$ in $\B$ and $\sigma\in G$.
\end{remark}

For the pentagonal identity of $\B\rtimes G$ see \cite{galindo2015crossed}. 
\end{definition}

\bigskip

\begin{definition}
    We define a \textbf{twisted} $G$-crossed product on $\B$ as a $G$-crossed product tensor category along with a (normalized) 3-cocycle, $\omega\in Z^3(G,\kt)$, where 
    \[\alpha^{\omega}_{[V,\sigma],[W,\tau],[Z,\rho]}=\omega(\sigma,\tau,\rho)\alpha_{[V,\sigma],[W,\tau],[Z,\rho]}\]
    for all $\sigma,\tau,\rho\in G$.

    We use the notation $\B\rtimes_\omega G$ to denote the twist by $\omega$. The 3-cocycle condition is equivalent to the associativity constraint and the condition of normality ensures that $\alpha^{\omega}_{[V,e],[W,\tau],[Z,\rho]}=\id$ and so the pentagonal identity for $\B\rtimes_\omega G$ follows similarly to \cite{galindo2015crossed}. 
\end{definition}

\newpage
\subsection{$\kt$-Linear 3-groups } 
We will use the notation ``$n$-morphisms'' to refer to the objects/morphisms at different categorical levels. Some use the terminology ``$n$-cells'' for the same purpose.
\begin{definition}
    A \textbf{$\kt$-linear 3-group} (or $\kt$-linear categorical 2-group) is a monoidal 2-category (for an in-depth review of monoidal 2-categories see \cite{ahmadi2023monoidal}) in which every object has a weak inverse under the tensor product, every 1-morphism has a weak inverse, and 2-morphisms form a torsor over $\kt$ and composition of 2-morphisms intertwines the $\kt$ action, i.e.,
    \[af\circ g=f\circ ag=a(f\circ g).\]

\end{definition}
\begin{remark}
    We note that our notion of \textit{linear} differs slightly from the notion of linear categories that have $\mathbbm{k}$-linear vector spaces as hom-spaces. The hope is that the notation $\kt$-linear creates enough of a distinction and helps excuse this sloppy terminology.
\end{remark}

\begin{exmp}
Given a $\mathbbm{k}$-linear monoidal category $(\B,\otimes,\alpha,1_\B)$ we define  $\Aut(\B)$ as follows: 
\begin{itemize}
    \item A singleton Obj$(\Aut(\B))=\bullet$ (0-morphism),
    \item A 2-category  $\Aut(\B)(\bullet,\bullet)$ with
    \begin{itemize}
        \item objects given by linear monoidal autoequivalences of $\B$ (1-morphisms),
        \item 1-morphisms given by linear pseudonatural isomorphisms of monoidal autoequivalences of $\B$ (2-morphisms),
        \item 2-morphisms form a $\kt$-torsor, given by invertible modifications with coefficients in $\kt$ (3-morphisms.)
    \end{itemize}
    
    \item Tensor product $\Bar{\otimes}$ for (1-3)-morphisms given by
    \begin{itemize}
        \item[1-morphisms:] (bilinear) composition of monoidal autoequivalences (written as juxtaposition),
        \item[2-morphisms:] If $(U_{K,K'},\chi'):K\Rightarrow K'$ and $(U_{H,H'},\chi):H\Rightarrow H'$ are pseudonatural isomorphisms, the (bilinear) tensor product $(U_{K,K'},\chi')\Bar{\otimes} (U_{H,H'},\chi):KH\Rightarrow K'H'$ is defined as
        \[(U_{K,K'},\chi')\Bar{\otimes} (U_{H,H'},\chi):=(K(U_{H,H'})\otimes U_{K,K'},\chi'\otimes \chi),\] where $\chi'\otimes \chi$ is given by the commutativity of the following diagram
\[\begin{tikzcd}[scale cd=0.85, center picture] 
	{KH(V)\otimes K(U_{H,H'})\otimes U_{K,K'}} && {K(U_{H,H'})\otimes U_{K,K'}\otimes K'H'(V)} \\
	\\
	{K(H(V)\otimes U_{H,H'})\otimes U_{K,K'}} & {K(U_{H,H'}\otimes H'(V))\otimes U_{K,K'}} & {K(U_{H,H'})\otimes KH'(V)\otimes U_{K,K'}}
	\arrow["{\psi^K\Bar{\otimes} \text{id}}"', from=3-1, to=1-1]
	\arrow["{K(\chi(V))\Bar{\otimes}\text{id}}", from=3-1, to=3-2]
	\arrow["{\psi^K\Bar{\otimes}\text{id}}", from=3-2, to=3-3]
	\arrow["{\text{id}\Bar{\otimes}\chi'(V)}", from=3-3, to=1-3]
	\arrow["{(\chi'\Bar{\otimes}\chi)(V)}", from=1-1, to=1-3]
\end{tikzcd}\]
where the subscripts of arrows are omitted for brevity.
        \item[3-morphisms:] For pseudonatural isomorphisms $\chi_1,\chi_2:K\Rightarrow K'$ and $\phi_1,\phi_2:H\Rightarrow H'$, and modifications $g:\chi_1\rightarrow\chi_2$ and $f:\phi_1\rightarrow\phi_2$ the tensor product of $g$ and $f$ is defined as $g\Bar{\otimes} f:=K(f)\otimes g$.
    \end{itemize}
\end{itemize}
The remaining properties of $\Aut(\B)$ as a $\kt$-linear 3-group follows intuitively, but will not be used in our construction and so we will omit the remaining data.
\newpage
\end{exmp}
\begin{exmp}
    
Given a triple $(G,A,\pi)$ where $G$ is a group, $A$ is a $G$-module and $\pi\in Z^4(G,A)$ is a (normalized) 4-cocycle we can build a (strictly unital) $A$-linear 3-group denoted $3\Gr(G,A,\pi)$ as follows (where similarly to above there is a singular 0-morphism):
\begin{itemize}
    \item The 1-morphisms are given by the elements of $G$,  2-morphisms between two 1-morphisms are given by
    \[\Hom(g,h)=\begin{cases} 
      \emptyset & \text{if }g\neq h  \\
      \id_g & \text{if }g=h, 
   \end{cases}\]
    and 3-morphisms are given by \[\Hom(\id_g,\id_g)=A\] 
   where we identify the identity in the category, $1_{\id_g}\in\Hom(\id_g,\id_g)$,  with the unit in $A$. Therefore we denote a 3-morphism $a\in \Hom(\id_g,\id_g)$ by $a1_{\id_g}$. 
   \item $g\otimes h:=gh$, the group unit $e$ is the monoidal unit.
   \item $\id_g\otimes\id_h:=\id_{gh}$ and $(a1_{\id_g})\otimes(b1_{\id_h}):=(ag(b))1_{\id_{gh}}$. \footnote{We will only consider $A$ with trivial $G$-action in this paper, therefore $(a1_{\id_g})\otimes(b1_{\id_h})=(ab)1_{\id_{gh}}$}
   \item The associator 
   \[\alpha_{g,h,j}:g\otimes (h\otimes j)=ghj\rightarrow (g\otimes h)\otimes j=ghj\]
   and unitor isomorphisms in the category are trivial.
   \item For every 4 objects $g,h,j,k,$ there is an invertible modification between compositions of associators called the pentagonator $\pi_{g,h,j,k}$ where 
   \[\pi_{g,h,j,k}:=\pi(g,h,j,k)1_{\id_{ghjk}}:\alpha_{gh,j,k}\circ \alpha_{g,h,jk}\rightarrow (\alpha_{g,h,j}\otimes \id_k) \circ \alpha_{g,hj,k}\circ (\id_g\otimes  \alpha_{h,j,k}).\]
\end{itemize}

\end{exmp}
\begin{remark}
    The 3-groups given by triples $(G,A,\pi)$ in this paper are only focused on the case that $A=\kt$ with trivial $G$ action and thus for brevity we will drop the $\kt$ and simple refer to such 3-groups as $3\Gr(G,\pi)$.
\end{remark}
The natural notion for a functor between $\kt$-linear 3-groups we will refer to as a \textbf{$\kt$-linear monoidal 2-functor}. We refer the reader to \cite{gurski2013monoidal} for more details on monoidal 2-functors. 

\begin{definition}
    A (strictly unital) \textbf{$\kt$-linear monoidal 2-functor} between (strictly unital) $\kt$-linear monoidal $2$-categories consists of
    \begin{itemize}
        \item A functor of the underlying\footnote{In this case by `underlying' we mean trivialize the 3-morphisms. In essence, we are cauterizing off the `top' layer of our 3-categories by taking equivalence classes of 2-morphisms up to invertible 3-morphisms.} monoidal categories $F:\B\rightarrow\B'$,
        \item Pseudonatural isomorphisms $(U_{\sigma,\tau},\chi_{\sigma,\tau}):F(\sigma)\otimes F(\tau)\Rightarrow F(\sigma\otimes \tau)$ for all $\sigma,\tau\in \B$ and
        \item Invertible modifications $\omega_{\sigma,\tau,\rho}:\chi_{\sigma,\tau\rho}\otimes (\id_{F(\sigma)}\otimes \chi_{\tau,\rho})\Rrightarrow\chi_{\sigma\tau,\rho}\otimes(\chi_{\sigma,\tau}\otimes\id_{F(\rho)})$ for all $\sigma,\tau,\rho\in \B$.
    \end{itemize}
    all subject to axioms which are identical to the trihomomorphism axioms (see \cite{gurskialgtri}) aside from source and target considerations.
\end{definition}

\newpage
\section{Anomalous actions}
\begin{definition} An \textbf{anomalous action} of a group $G$ on a $\mathbbm{k}$-linear monoidal category $\mathcal{C}$ consists of a $4$-cocycle $\pi\in Z^4(G,\kt)$ and a $\kt$-linear monoidal 2-functor $3\Gr(G,\pi)\rightarrow \Aut(\mathcal{C})$. The cohomology class $[\pi]\in H^4(G,\kt)$ is called the \textbf{anomaly}. If $\pi\in Z^4(G,\kt)$ is given it is convenient to use the terminology \textit{$\pi$-anomalous action}. This definition is a categorified version of the definition of an anomalous action of a group on an algebra in \cite{Jones_2021}.
\end{definition}
\begin{remark}
    In many applications of physics it is important that things are unitary and in that case we would just choose our field to be $\C$ and identify $\C^{\times}\cong U(1)$ in the natural way. We note that none of the structure of the arguments change in this case and cohomology valued in $U(1)$ is equal to cohomology valued in $\C^{\times}$.
\end{remark}

We want to know how to construct explicit anomalous actions and therefore the  goal of this section is to prove the main theorem of this paper, theorem (\ref{theorem1}). We do this in a series of lemmas. First we unpack some of the structure we are given. The equation $d\omega=\rho^*(\pi)$ unpacks to give, for all $x,y,w,z\in G$,
\begin{equation}\label{eq2.1}
\begin{split}
    \rho^*(\pi)(x,y,z,w) &=\frac{\omega(y,z,w)\omega(x,yz,w)\omega(x,y,z)}{\omega(xy,z,w)\omega(x,y,zw)}   \\ \omega(y,z,w)\omega(x,yz,w)\omega(x,y,z)&=\pi(\rho(x),\rho(y),\rho(z),\rho(w))\omega(xy,z,w)\omega(x,y,zw).
\end{split}
\end{equation}
Furthermore, since $\pi$ is assumed to be normalized, we have
\begin{equation}\label{eq2.2}
    \rho^*(\pi)(x,y,w,z)=1 \text{ if any }x,y,w,z\in K.
\end{equation}
Thus the restriction of $\omega$ to $K$ is indeed a (normalized) 3-cocycle. Restricting the $G$-action on $\B$ to $K$ then gives the twisted $K$-crossed product category $\BK$. Our $G$ action on $K$ is given by conjugation and as notation, we let 
\[k^{g}:=g kg\inv\quad\text{for all } k\in K \text{ and } g\in G.\]
The goal will be to use the given data to define a $\kt$-linear monoidal 2-functor 
\[3\Gr(Q,\pi)\rightarrow \Aut(\BK).\]
To preform this construction, we first pick a set theoretic (unital) section of $\rho$ where $q\mapsto \q$ for all $q\in Q$. 

\begin{definition}
For each $q\in Q$, we define an endofunctor of $\BK$ as
\begin{equation}\label{eq2.8}
\begin{split}
    q_*&:\BK\rightarrow\BK\\
    &\,[V,k]\mapsto [\q_* V,k^{\q}]\\
    &\,[f,k]\mapsto [\q_* f,k^{\q}].
\end{split}
\end{equation} 
which is well-defined by the $G$-action on $\B$. We also define maps 
\[\psit^q_{[V,k],[W,l]}:=\left(\frac{\omega(kq\inv,q,lq\inv)\omega(k,l,q\inv)}{\omega(q,kq\inv,l^{q})\omega(k,q\inv,q)}\right)([\psi^{\q}_{V,k_* W},(kl)^{\q})]\circ[\id\otimes\chi_{\q,k}(W)\inv,(kl)^{\q}]\circ[\id \otimes\chi_{k^{\q},\q}(W),(kl)^{\q}])\]
\[\psit^q_{[V,k],[W,l]}:q_*[V,k]\cdot q_*[W,l]\rightarrow q_*([V,k]\cdot[W,l])\]
for every $q\in Q$.
\end{definition}

\newpage

\begin{lemma}
    For any $q\in Q$, $(q_*,\psit^q)$ may be equipped with the structure of a linear monoidal autoequivalence of $\BK$. 
\end{lemma}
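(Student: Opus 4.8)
The plan is to verify directly that the pair $(q_*, \psit^q)$ satisfies the definition of a linear monoidal autoequivalence of $\BK$, i.e., that $\psit^q$ is a natural isomorphism satisfying the hexagon/pentagon compatibility axiom (Diagram \eqref{eq1.2}) and that $q_*$ is an equivalence of categories. The essential content is the monoidal coherence axiom, since invertibility and essential surjectivity of $q_*$ are immediate from the fact that $\q_*$ is an autoequivalence of $\B$ and that $k \mapsto k^{\q}$ is a bijection on $K$.

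First I would record that each component map $\psit^q_{[V,k],[W,l]}$ is an isomorphism: it is a scalar multiple of a composite of $[\psi^{\q}_{V,k_*W}, -]$, $[\id \otimes \chi_{\q,k}(W)^{-1}, -]$, and $[\id \otimes \chi_{k^{\q},\q}(W), -]$, each of which is invertible because $\psi^{\q}$, $\chi$ are natural isomorphisms; the scalar prefactor built from $\omega$ is a unit in $\kt$. Naturality in $[V,k]$ and $[W,l]$ follows from naturality of $\psi^{\q}$ and $\chi$ together with functoriality of $\q_*$, and the fact that a morphism $[f,k]\cdot[g,l]$ in $\BK$ is $[f \otimes k_* g, kl]$. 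Next I would reduce the coherence check to generators: by Remark \ref{rem1} every object of $\BK$ is a sum of tensor products of $[V]$ and $[k]$, so it suffices to verify the pentagon-type diagram \eqref{eq1.2} on objects of these forms, which lets me unpack $q_*$ and $\psit^q$ explicitly using \eqref{eq2.8} and the displayed formula for $\psit^q$.

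The heart of the argument is then the coherence diagram: one must show that the two ways of reassociating $q_*[V,k]\cdot q_*[W,l]\cdot q_*[Z,m]$ to $q_*([V,k]\cdot[W,l]\cdot[Z,m])$ agree. This splits into a \emph{structural} part and a \emph{scalar} part. The structural part — that the composites of $\psi^{\q}$'s and $\chi$'s agree — follows from the monoidal functor axiom \eqref{eq1.2} for $\q_*$, the cocycle-type coherence \eqref{eq1.3} for $\chi$, and the associativity pentagon \eqref{eq1.1} for the $G$-action; this is essentially the same bookkeeping that establishes associativity of $\BK$ itself in \cite{galindo2015crossed}, transported through $\q_*$. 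The scalar part is where the precise $\omega$-prefactor in the definition of $\psit^q$ earns its keep: collecting all the $\omega$-scalars produced on the two sides of the diagram, together with the $\omega(\sigma,\tau,\rho)$ twist in the associator of $\BK$ (from the twisted crossed product) and its image under $q_*$, must cancel. I expect this cancellation to reduce, after clearing terms, to the $3$-cocycle identity for $\omega$ evaluated at arguments of the form $kq^{-1}, q, \ell q^{-1}, \ldots$ — i.e. to a shifted instance of $d\omega = 0$ on $G$ — possibly combined with \eqref{eq2.1}.

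The main obstacle will be precisely this scalar bookkeeping: writing the $\omega$-prefactor of each of the three instances of $\psit^q$ appearing in diagram \eqref{eq1.2} (with the correct substituted group elements, e.g. replacing $k$ by $kl$ or $W$ by $k_*W$ etc.), multiplying in the twist scalars $\omega(k^{\q}, l^{\q}, m^{\q})$ on one side and $\omega(k,l,m)$ transported through $q_*$ on the other, and checking the resulting large rational expression in $\omega$ is identically $1$ using only the $3$-cocycle relation for $\omega$ on $G$. I would organize this by choosing the prefactor so that $\psit^q$ is exactly the obstruction-free comparison map "up to a coboundary-like correction," making the identity fall out of $d\omega = 0$; getting the substitution pattern right is the delicate point, but it is a finite, mechanical verification once set up. Strict unitality of $\omega$ (normalization) then guarantees the unit compatibility $\psit^q_{[1_\B],-} = \id = \psit^q_{-,[1_\B]}$, completing the proof.
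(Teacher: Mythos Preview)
Your overall strategy matches the paper's: split the monoidal coherence for $(q_*,\psit^q)$ into a structural part (handled by the $G$-action axioms \eqref{eq1.1}--\eqref{eq1.3} on $\B$, exactly as you say) and a scalar part coming from the $\omega$-prefactors, and check that $q_*$ is an equivalence separately. The structural argument you sketch and the equivalence argument are essentially what the paper does.

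Where you diverge is the scalar verification, and there is a small but real gap. You expect the cancellation to reduce to ``a shifted instance of $d\omega=0$ on $G$,'' possibly together with \eqref{eq2.1}. But $\omega$ is \emph{not} a cocycle on $G$; on all of $G$ one has $d\omega=\rho^*(\pi)$, which is generically nontrivial. The point you are missing is \eqref{eq2.2}: every $4$-tuple at which a pentagon identity for $\omega$ is needed in this computation turns out to contain at least one entry lying in $K$, so $\rho^*(\pi)$ vanishes there by normalization of $\pi$, and hence $d\omega=1$ on exactly the tuples that matter. The paper packages this observation by reinterpreting the $\omega$-prefactor of $\psit^q$ as a composite of associators for the conjugation action $g_*k := g\otimes(k\otimes g^{-1})$ on $\vek(K,\omega)$: the scalar coherence diagram for $(q_*,\psit^q)$ then becomes a diagram built entirely from associators, which commutes by Mac Lane's coherence theorem. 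Your direct computation would go through once you invoke \eqref{eq2.2}, but the paper's repackaging is what explains the particular form of the prefactor and avoids chasing a long product of $\omega$-identities by hand.
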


\begin{proof}
    It is clear that $q_*$ defines a functor from $\BK$ to itself. 
    By definition, $\psit^q$ is a composition of natural isomorphisms (times a scalar) 
    \[\psitq_{[V,k],[W,l]}=\left(\frac{\omega(kq\inv,q,lq\inv)\omega(k,l,q\inv)}{\omega(q,kq\inv,l^{q})\omega(k,q\inv,q)}\right)[\psi^{\q}_{V,k_* W},(kl)^{\q})]\circ[\id\otimes\chi_{\q,k}(W)\inv,(kl)^{\q}]\circ[\id \otimes\chi_{k^{\q},\q}(W),(kl)^{\q}]\]
    We note that the isomorphisms $\chi_{k^{\q},\q}(W)$, $(\chi_{\q,k}(W))\inv$ and $\psi^{\q}_{V,k_* W}$ are defined by the $G$-action over $\B$. We need the following diagram to commute in order for $q_*$ to be monoidal
\begin{equation}\label{monoidal comm diagram for q}
    \begin{tikzcd}
	{q_*[V,k]\cdot (q_*[W,l]\cdot q_*[Z,m])} && {(q_*[V,k]\cdot q_*[W,l])\cdot q_*[Z,m]} \\
	{q_*[V,k]\cdot q_*([W,l]\cdot[Z,m])} && {q_*([V,k]\cdot [W,l])\cdot q_*[Z,m]} \\
	{q_*([V,k]\cdot([W,l]\cdot[Z,m]))} && {q_*(([V,k]\cdot[W,l])\cdot[Z,m])}
	\arrow["{\text{id}\otimes\psit^q}"', from=1-1, to=2-1]
	\arrow["{\psit^q}"', from=2-1, to=3-1]
	\arrow["\alpha^\omega", from=1-1, to=1-3]
	\arrow["{q_*(\alpha^\omega)}"', from=3-1, to=3-3]
	\arrow["{\psit^q\otimes \text{id}}", from=1-3, to=2-3]
	\arrow["{\psit^q}", from=2-3, to=3-3]
\end{tikzcd}
\end{equation}
where subscripts are omitted. In order to check the commutativity of this diagram, we must expand our $\alpha^\omega$'s and $\psitq$'s. First we compute 
\[q_*[V,k]\cdot (q_*[W,l]\cdot q_*[Z,m])=[\q_* V\otimes k^{\q}_*(\q_* W\otimes l^{\q}_* \q_* Z),(klm)^{\q}]\]

We note that each ``$K$-component'' will be $(klm)^{q}$, and thus as a space-saving measure we only consider the ``$\B$-component'' as well as omitting subscripts and coloring morphisms with the key
\[\text{morphisms with }\begin{cases} 
      \chi & \textcolor{myblue}{\text{blue}} \\
      \psi & \textcolor{myred}{\text{red}} \\
      \alpha^{\B} & \textcolor{mygreen}{\text{green}} .
   \end{cases}\]
We also suppress the tensor product $\otimes$, the hats on $q$ and refer to the identity morphism as $1$ as a space saving measure.

\newgeometry{noheadfoot=true,top=1cm,bottom=1cm}

\[\begin{tikzcd}[ampersand replacement=\&,scale cd=0.48,column sep = 0.95em, center picture] 
	\&\& {q_* Vk^q_*(q_* W l^q_* q_* Z)} \& {q_* V(k^q_* q_* W k^q_* l^q_* q_* Z)} \& {q_* V(k^q_* q_* W (kl)^q_* q_* Z)} \& {(q_* V k^q_* q_* W) (kl)^q_* q_* Z} \\
	\\
	\& {q_* V k^q_*(q_* W (ql)_* Z)} \&\& {q_* V(k^q_* q_* W k^q_* (ql)_* Z)} \&\& {q_* V ((qk)_* W (kl)^q_* q_* Z)} \& {(q_* V (qk)_* W) (kl)^q_* q_* Z} \\
	\\
	\& {q_* V k^q_*(q_* W q_*l_* Z)} \& {q_* V (k^q_*q_* W k^q_* q_*l_* Z)} \&\& {q_* V ((qk)_* W (qkl)_* Z)} \& {q_* V (q_* k_* W (kl)^q_* q_* Z)} \& {(q_* V (qk)_* W) (kl)^q_* q_* Z} \\
	\\
	{q_* V k^q_*q_*( W l_* Z)} \& {q_* V k^q_*(q_* W  q_*l_* Z)} \& {\fbox{\text{P.I}}} \& {q_* V ((qk)_* W (qk)_*l_* Z)} \& {q_* V ((qk)_* W(qk)_*l_* Z)} \& {(q_* V (qk)_* W) (qkl)_* Z} \& {(q_* V q_* k_* W) (kl)^q_* q_* Z} \& {q_*( V k_* W) (kl)^q_* q_* Z} \\
	\\
	\& {q_* V k^q_*q_*( W l_* Z)} \&\& {q_* V (qk)_*(W l_* Z)} \\
	{q_* V (qk)_*( W l_* Z)} \&\&\& {\fbox{\text{P.II}}} \& {q_* V  (q_* k_* W q_* k_* l_* Z)} \& {(q_* V (qk)_* W) (qk)_*l_* Z} \&\& {(q_* V q_* k_* W) (qkl)_* Z} \& {q_*( V k_* W) (qkl)_* Z} \\
	\& {q_* V q_* k_*( W l_* Z)} \\
	\&\& {q_* V q_* (k_* W k_* l_* Z)} \&\&\&\& {(q_* V q_* k_* W) (qkl)_* Z} \& {(q_* V q_* k_* W) q_*(kl)_* Z} \\
	\&\&\& {q_* V  (q_* k_* W q_* k_* l_* Z)} \\
	\& {q_* V q_*(kq^{-1})_*q_*( W l_* Z)} \&\&\& {(q_* V  q_* k_* W) q_* k_* l_* Z} \& {(q_* V q_* k_* W) (qk)_*l_* Z} \\
	\&\&\& {\fbox{H}} \&\&\& {q_*( V k_* W) (qkl)_* Z} \\
	\&\&\&\&\& {q_*( V  k_* W) (qk)_*l_* Z} \\
	\&\& {q_* (V  k_*( W l_* Z))} \&\& {q_*(V   k_* W) q_* k_* l_* Z} \\
	\& {q_* V q_* k_*( W l_* Z)} \\
	\&\&\&\& {q_*(( V  k_* W) k_*l_* Z)} \&\& {q_*( V k_* W) q_*(kl)_* Z} \\
	\& {q_* (V  k_*( W l_* Z))} \& {q_* (V (kq^{-1})_*q_*( W l_* Z))} \& {q_* (V  (k_* W k_* l_* Z))} \&\& {q_*( V  k_* W) q_* k_*l_* Z} \\
	\\
	\&\&\&\& {q_* (V  (k_* W (kl)_* Z))} \\
	\\
	\&\&\& {q_* (V  k_*( W l_* Z))} \&\&\& {q_*( V k_* W) q_*(kl)_* Z} \\
	\&\& {q_* (V  (k_* W k_* l_* Z))} \&\& {q_* ((V  k_* W) k_* l_* Z)} \\
	\\
	\&\&\& {q_* (V  (k_* W (kl)_* Z))} \&\& {q_*(( V k_* W) (kl)_* Z)}
	\arrow["{1 k^q_*(1 \chi)}"', color={rgb,255:red,59;green,62;blue,247}, from=1-3, to=3-2]
	\arrow["{1\psi^{-1}}", color={rgb,255:red,214;green,92;blue,92}, from=1-3, to=1-4]
	\arrow["{1\psi^{-1}}", color={rgb,255:red,214;green,92;blue,92}, from=3-2, to=3-4]
	\arrow["{1 (1 k^q_*(\chi))}"{description}, color={rgb,255:red,59;green,62;blue,247}, from=1-4, to=3-4]
	\arrow["{1(1\chi)}", shift left, color={rgb,255:red,59;green,62;blue,247}, from=1-4, to=1-5]
	\arrow["{1 (\chi1)}"{description}, color={rgb,255:red,59;green,62;blue,247}, from=1-5, to=3-6]
	\arrow["{\alpha^{\mathcal{C}}}", color={rgb,255:red,92;green,214;blue,92}, from=1-5, to=1-6]
	\arrow["{\alpha^{\mathcal{C}}}", color={rgb,255:red,92;green,214;blue,92}, from=3-6, to=3-7]
	\arrow["{(1\chi)1}", color={rgb,255:red,59;green,62;blue,247}, from=1-6, to=3-7]
	\arrow["{1k^q_*(1 \chi^{-1})}"', color={rgb,255:red,59;green,62;blue,247}, from=3-2, to=5-2]
	\arrow["{1\psi^{-1}}", color={rgb,255:red,214;green,92;blue,92}, from=5-2, to=5-3]
	\arrow["{1(1 k^q_*(\chi^{-1}))}", color={rgb,255:red,59;green,62;blue,247}, from=3-4, to=5-3]
	\arrow["{1 k_*^q(\psi)}"', color={rgb,255:red,214;green,92;blue,92}, from=5-2, to=7-1]
	\arrow["{1 \psi}", color={rgb,255:red,214;green,92;blue,92}, from=5-3, to=7-2]
	\arrow["{1k_*^q(\psi)}"', color={rgb,255:red,214;green,92;blue,92}, from=7-2, to=7-1]
	\arrow["{(1\chi^{-1})1}", color={rgb,255:red,59;green,62;blue,247}, from=3-7, to=5-7]
	\arrow["{1(\chi^{-1}1)}", color={rgb,255:red,59;green,62;blue,247}, from=3-6, to=5-6]
	\arrow["{\alpha^{\mathcal{C}}}", color={rgb,255:red,92;green,214;blue,92}, from=5-6, to=5-7]
	\arrow["\psi1", color={rgb,255:red,214;green,92;blue,92}, from=5-7, to=7-8]
	\arrow["{\alpha^{\mathcal{C}}}", color={rgb,255:red,92;green,214;blue,92}, from=5-6, to=7-7]
	\arrow["\psi1", color={rgb,255:red,214;green,92;blue,92}, from=7-7, to=7-8]
	\arrow["1\chi", color={rgb,255:red,59;green,62;blue,247}, from=7-7, to=10-8]
	\arrow["\psi1", color={rgb,255:red,214;green,92;blue,92}, from=10-8, to=10-9]
	\arrow["{1\chi^{-1}}", color={rgb,255:red,59;green,62;blue,247}, curve={height=-30pt}, from=10-9, to=24-7]
	\arrow["{1 \chi^{-1}}", color={rgb,255:red,59;green,62;blue,247}, from=10-8, to=12-8]
	\arrow["\psi1", color={rgb,255:red,214;green,92;blue,92}, curve={height=-12pt}, from=12-8, to=24-7]
	\arrow["1\chi", color={rgb,255:red,59;green,62;blue,247}, from=7-8, to=10-9]
	\arrow["{1(\chi\chi)}", shift left, color={rgb,255:red,59;green,62;blue,247}, from=5-3, to=7-4]
	\arrow["{1(1\chi)}"{description}, shift left, color={rgb,255:red,59;green,62;blue,247}, from=7-4, to=5-5]
	\arrow["{1(\chi\chi)}"', shift right, color={rgb,255:red,59;green,62;blue,247}, from=5-6, to=5-5]
	\arrow["\psi1"{description}, color={rgb,255:red,214;green,92;blue,92}, from=12-8, to=19-7]
	\arrow["\psi", color={rgb,255:red,214;green,92;blue,92}, from=19-7, to=27-6]
	\arrow["\psi", color={rgb,255:red,214;green,92;blue,92}, from=24-7, to=27-6]
	\arrow["1\chi"', color={rgb,255:red,59;green,62;blue,247}, from=7-1, to=10-1]
	\arrow["{\text{id}\otimes k_*^q(\psi)}"', color={rgb,255:red,214;green,92;blue,92}, from=7-2, to=9-2]
	\arrow["1\chi"', color={rgb,255:red,59;green,62;blue,247}, from=9-2, to=10-1]
	\arrow["{1\chi^{-1}}"', color={rgb,255:red,59;green,62;blue,247}, curve={height=24pt}, from=10-1, to=18-2]
	\arrow["{1\chi^{-1}}"', color={rgb,255:red,59;green,62;blue,247}, curve={height=30pt}, from=9-2, to=14-2]
	\arrow["{1 q_*(\chi)}", color={rgb,255:red,59;green,62;blue,247}, from=14-2, to=18-2]
	\arrow["\psi"', color={rgb,255:red,214;green,92;blue,92}, from=14-2, to=20-3]
	\arrow["\psi"', color={rgb,255:red,214;green,92;blue,92}, from=18-2, to=20-2]
	\arrow["{q_*(1\chi)}", color={rgb,255:red,59;green,62;blue,247}, from=20-3, to=20-2]
	\arrow["{q_*(1\psi^{-1})}"', color={rgb,255:red,214;green,92;blue,92}, from=20-2, to=25-3]
	\arrow["{q_*(1\chi)}", color={rgb,255:red,59;green,62;blue,247}, from=20-3, to=24-4]
	\arrow["{q_*(1\psi^{-1})}"', color={rgb,255:red,214;green,92;blue,92}, from=24-4, to=25-3]
	\arrow["{q_*(1(1\chi))}"', color={rgb,255:red,59;green,62;blue,247}, from=25-3, to=27-4]
	\arrow["{q_*(\alpha^{\mathcal{C}})}"', color={rgb,255:red,92;green,214;blue,92}, from=27-4, to=27-6]
	\arrow["{q_*(\alpha^{\mathcal{C}})}"', color={rgb,255:red,92;green,214;blue,92}, from=25-3, to=25-5]
	\arrow["{q_*(1\chi)}"', color={rgb,255:red,59;green,62;blue,247}, from=25-5, to=27-6]
	\arrow["1\psi", color={rgb,255:red,214;green,92;blue,92}, from=7-4, to=9-4]
	\arrow["1\chi", color={rgb,255:red,59;green,62;blue,247}, from=9-2, to=9-4]
	\arrow["{1q_*(\chi)}"{description}, color={rgb,255:red,59;green,62;blue,247}, from=14-2, to=11-2]
	\arrow["{1\chi^{-1}}", color={rgb,255:red,59;green,62;blue,247}, from=9-4, to=11-2]
	\arrow["{1\psi^{-1}}"{description}, color={rgb,255:red,214;green,92;blue,92}, from=9-4, to=7-5]
	\arrow["{1( 1\chi)}"{description}, color={rgb,255:red,59;green,62;blue,247}, from=7-5, to=5-5]
	\arrow["{q_*(1\chi)}"{description}, color={rgb,255:red,59;green,62;blue,247}, from=20-3, to=17-3]
	\arrow["\psi", color={rgb,255:red,214;green,92;blue,92}, from=11-2, to=17-3]
	\arrow["{q_*(1\psi^{-1})}"{description}, color={rgb,255:red,214;green,92;blue,92}, from=24-4, to=20-4]
	\arrow["{q_*(1\psi^{-1})}"{description}, color={rgb,255:red,214;green,92;blue,92}, from=17-3, to=20-4]
	\arrow["{\alpha^{\mathcal{C}}}", color={rgb,255:red,92;green,214;blue,92}, from=7-5, to=10-6]
	\arrow["{\alpha^{\mathcal{C}}}"', color={rgb,255:red,92;green,214;blue,92}, from=5-5, to=7-6]
	\arrow["1\chi"{description}, color={rgb,255:red,59;green,62;blue,247}, from=10-6, to=7-6]
	\arrow["{(1\chi)1}"{description}, color={rgb,255:red,59;green,62;blue,247}, from=10-8, to=7-6]
	\arrow["{q_*(\alpha^{\mathcal{C}})}"', color={rgb,255:red,92;green,214;blue,92}, from=20-4, to=25-5]
	\arrow["{q_*(1(1\chi))}"{description}, color={rgb,255:red,59;green,62;blue,247}, from=20-4, to=22-5]
	\arrow["{q_*(\alpha^{\mathcal{C}})}"{description}, color={rgb,255:red,92;green,214;blue,92}, from=22-5, to=27-6]
	\arrow["{(1\chi^{-1})1}"{description}, color={rgb,255:red,59;green,62;blue,247}, from=7-6, to=12-7]
	\arrow["{1\chi^{-1}}", color={rgb,255:red,59;green,62;blue,247}, from=12-7, to=12-8]
	\arrow["\psi1", color={rgb,255:red,214;green,92;blue,92}, from=12-7, to=15-7]
	\arrow["{1\chi^{-1}}"', color={rgb,255:red,59;green,62;blue,247}, from=15-7, to=19-7]
	\arrow["{1\psi^{-1}}", color={rgb,255:red,214;green,92;blue,92}, from=11-2, to=12-3]
	\arrow["\psi", color={rgb,255:red,214;green,92;blue,92}, from=12-3, to=20-4]
	\arrow["{1\psi^{-1}}", color={rgb,255:red,214;green,92;blue,92}, from=12-3, to=10-5]
	\arrow["{1(\chi^{-1}\chi^{-1})}"{description}, color={rgb,255:red,59;green,62;blue,247}, from=7-5, to=10-5]
	\arrow["{(1\chi^{-1})1}", color={rgb,255:red,59;green,62;blue,247}, from=10-6, to=14-6]
	\arrow["1\chi"', color={rgb,255:red,59;green,62;blue,247}, from=14-6, to=12-7]
	\arrow["\psi1"', color={rgb,255:red,214;green,92;blue,92}, from=14-6, to=16-6]
	\arrow["1\chi"', color={rgb,255:red,59;green,62;blue,247}, from=16-6, to=15-7]
	\arrow["{1\chi^{-1}}"', color={rgb,255:red,59;green,62;blue,247}, from=16-6, to=20-6]
	\arrow["{\text{id}\otimes q_*(\chi)}"', color={rgb,255:red,59;green,62;blue,247}, from=20-6, to=19-7]
	\arrow["{\psi^{-1}}", color={rgb,255:red,214;green,92;blue,92}, from=19-5, to=20-6]
	\arrow["{q_*(\alpha^{\mathcal{C}})}", color={rgb,255:red,92;green,214;blue,92}, from=20-4, to=19-5]
	\arrow["{q_*(1\chi)}", color={rgb,255:red,59;green,62;blue,247}, curve={height=-12pt}, from=19-5, to=27-6]
	\arrow["{\alpha^{\mathcal{C}}}"', color={rgb,255:red,92;green,214;blue,92}, from=10-5, to=14-5]
	\arrow["1\chi"', color={rgb,255:red,59;green,62;blue,247}, from=14-5, to=14-6]
	\arrow["\psi1"', color={rgb,255:red,214;green,92;blue,92}, from=14-5, to=17-5]
	\arrow["\psi"', color={rgb,255:red,214;green,92;blue,92}, from=17-5, to=19-5]
	\arrow["1\chi", color={rgb,255:red,59;green,62;blue,247}, from=17-5, to=16-6]
	\arrow["{1\psi^{-1}}", color={rgb,255:red,214;green,92;blue,92}, from=12-3, to=13-4]
	\arrow["{\alpha^{\mathcal{C}}}", color={rgb,255:red,92;green,214;blue,92}, from=13-4, to=14-5]
	\arrow["{(1\chi)\chi}"', color={rgb,255:red,59;green,62;blue,247}, from=7-7, to=7-6]
	\arrow["{1 (\chi\chi)}"{description}, color={rgb,255:red,59;green,62;blue,247}, from=1-5, to=5-5]
	\arrow["{1 (\chi\chi)}", color={rgb,255:red,59;green,62;blue,247}, from=3-4, to=5-5]
	\arrow["{(1\chi^{-1})\chi^{-1}}"{description}, color={rgb,255:red,59;green,62;blue,247}, from=10-6, to=14-5]
\end{tikzcd}\]
\restoregeometry
The commutativity of the ``squares'' are obvious enough, coming from functorality, naturality and our assumption that $\B$ is strictly unital. We do note that the commutativity of the pentagons (labeled \fbox{P.I} and \fbox{P.II} above) 
\[\begin{tikzcd}
	{\q_* \widehat{r}_*V\otimes \q_* \widehat{r}_* k_* W} & {\q_* (\widehat{r}_*V\otimes \widehat{r}_* k_* W)} \\
	& {\q_* \widehat{r}_*(V\otimes  k_* W)} \\
	{(\q\widehat{r})_*V\otimes (\q\widehat{r})_* k_* W} & {(\q\widehat{r})_*(V\otimes  k_* W)}
	\arrow["\psi", color={rgb,255:red,214;green,92;blue,92}, from=1-1, to=1-2]
	\arrow["{\q_*(\psi)}"', color={rgb,255:red,214;green,92;blue,92}, from=1-2, to=2-2]
	\arrow["\chi"', color={rgb,255:red,96;green,96;blue,215}, from=2-2, to=3-2]
	\arrow["\chi\otimes\chi", color={rgb,255:red,96;green,96;blue,215}, from=1-1, to=3-1]
	\arrow["\psi", color={rgb,255:red,214;green,92;blue,92}, from=3-1, to=3-2]
\end{tikzcd}\]
is exactly the requirement that $\chi_{\q,\ar}:\q_* \ar_*\rightarrow (\q\ar)_*$ is a natural isomorphism of monoidal functors\footnote{Recall that $\chi_{\q,\ar}$ is part of the structure of the $G$-action on $\B$.}. The commutativity of the ``hexagon'' (labelled \fbox{H} above)
\[\begin{tikzcd}
	{\q_* V\otimes  (\q_* k_* W\otimes \q_* k_* l_* Z)} & {(\q_* V\otimes  \q_* k_* W)\otimes \q_* k_* l_* Z} \\
	{\q_* V\otimes \q_* (k_* W\otimes k_* l_* Z)} & {\q_*(V\otimes   k_* W)\otimes \q_* k_* l_* Z} \\
	{\q_* (V\otimes  (k_* W\otimes k_* l_* Z))} & {\q_*(( V\otimes  k_* W)\otimes k_*l_* Z)}
	\arrow["\psi", color={rgb,255:red,214;green,92;blue,92}, from=2-1, to=3-1]
	\arrow["{\q_*(\alpha^{\mathcal{C}})}", color={rgb,255:red,92;green,214;blue,92}, from=3-1, to=3-2]
	\arrow["{\psi\otimes\text{id}}"', color={rgb,255:red,214;green,92;blue,92}, from=1-2, to=2-2]
	\arrow["\psi"', color={rgb,255:red,214;green,92;blue,92}, from=2-2, to=3-2]
	\arrow["{\text{id}\otimes\psi}"', color={rgb,255:red,214;green,92;blue,92}, from=1-1, to=2-1]
	\arrow["{\alpha^{\mathcal{C}}}", color={rgb,255:red,92;green,214;blue,92}, from=1-1, to=1-2]
\end{tikzcd}\]
is exactly the requirement that $\psi^{\q}$ is monoidal, which follows from the definition of $G$-action on $\B$. Finally, in order for (\ref{monoidal comm diagram for q}) to commute we need the $\kt$-coefficients on both sides to be equal. To achieve this, consider $\vek(K,\omega)$ as a $\underline{G}$-module with the action defined by
\[g_* k:=g \otimes (k\otimes g\inv).\]
This action clearly defines an autoequivalence of $\vek(K,\omega)$ for each $g\in G$, but we check if it is indeed monoidal. We define natural isomorphisms
\[\psi(g)_{k,l}:g_* k\otimes g_* l\rightarrow g_*(k\otimes l)\]
which gives the diagram 
\[\begin{tikzcd}
	{(g\otimes (k\otimes g^{-1}))\otimes(g\otimes(l\otimes g^{-1}))} & {g\otimes \big((k\otimes g^{-1})\otimes(g\otimes(l\otimes g^{-1}))\big)} \\
	{\big((g\otimes (k\otimes g^{-1}))\otimes g\big)\otimes(l\otimes g^{-1})} & {g\otimes \big(((k\otimes g^{-1})\otimes g)\otimes(l\otimes g^{-1})\big)} \\
	{\big(g\otimes ((k\otimes g^{-1})\otimes g)\big)\otimes(l\otimes g^{-1})} & {g\otimes \big(k\otimes(l\otimes g^{-1})\big)} \\
	{\big(g\otimes k\big)\otimes(l\otimes g^{-1})} \\
	{g\otimes \big(k\otimes(l\otimes g^{-1})\big)} & {g\otimes (kl\otimes g^{-1})}
	\arrow["{\omega(g,kg^{-1},glg^{-1})^{-1}}", curve={height=-18pt}, from=1-1, to=1-2]
	\arrow["{\omega(gkg^{-1},g,lg^{-1})}"', from=1-1, to=2-1]
	\arrow["{1\otimes\omega(kg^{-1},g,lg^{-1})}", from=1-2, to=2-2]
	\arrow["{\omega(g,kg^{-1},g)^{-1}\otimes 1}"', from=2-1, to=3-1]
	\arrow["{1\otimes\omega(k,g^{-1},g)^{-1}}", from=2-2, to=3-2]
	\arrow["{1\otimes \omega(k,g^{-1},g)^{-1}\otimes 1}"', from=3-1, to=4-1]
	\arrow["{1\otimes\omega(k,l,g^{-1})}", from=3-2, to=5-2]
	\arrow["{\omega(g,k,lg^{-1})^{-1}}"', from=4-1, to=5-1]
	\arrow["{1\otimes\omega(k,l,g^{-1})}"', from=5-1, to=5-2]
\end{tikzcd}\]
Because the diagram is comprised of associators, by Mac Lane's coherence theorem \cite{Maclane1971-MACCFT} the diagram must commute. We chose\footnote{We note that while technically we are \textit{choosing} the right side of the diagram to represent the scalar for $\chit$, there is no real choice to be made, as the diagram commutes and thus the coefficients on each side are equal.} $\psit$ to have scalar coming from the right side of the above diagram. Now, considering that the coefficients of each morphism in (\ref{monoidal comm diagram for q}) are defined as the corresponding coefficients of structure morphisms from the $\underline{G}$-action on $\vek(K,\omega)$, we see that, because each coefficient may be viewed as an associator in $\vek(K,\omega)$, the coefficients on either side of the diagram must agree again by Mac Lane.

    For any object $[V,k]\in\BK$ we also have $[q\inv_* V,q\inv kq]\in \BK$ ($q\inv_* V\in \B$ because of the $G$-action on $\B$ and $q\inv kq\in K$ because $K$ is a normal subgroup of $G$) such that 
    \[q_*[q\inv_* V,q\inv kq]=[q_*q\inv_* V,q(q\inv kq)q\inv]\cong [e_* V,k]=[V,k].\]
    Thus $q_*$ is essentially surjective. Finally, $q_*$ is fully faithful if, for each pair of objects $[V,k],[W,l]\in \BK$, the function
    \[q_*:\BK([V,k],[W,l])\rightarrow\BK(q_*[V,k],q_*[W,l])\]
    between hom-sets is bijective. We note that, by definition of $\BK$, 
    \[\BK([V,k],[W,l])=\emptyset \quad \text{if }k\neq l.\]
    Therefore it suffices to consider pairs of objects $[V,k],[W,k]\in \BK$. Again by definition of $\BK$, we have
    \[\BK([V,k],[W,k])=\B(V,W).\]
    Likewise
    \[\BK([q_* V,k^{q}],[q_* W,k^{q}])=\B(q_* V, q_* W).\]
    By definition of $G$-action over $\B$, $q_*$ defines an automorphism of $\B$ hence is fully faithful as a functor
    \[q_*:\B\rightarrow\B\]
    thus there exists a bijection from $\B(V,W)$ to $\B(q_* V,q_* W)$. Therefore $q_*$ is fully faithful as a functor from $\BK$ to itself and $(q_*,\psitq)$ is a monoidal autoequivalence of $\BK$.
\end{proof}

The assignment $q\mapsto q_*$ defines a functor between the underlying monoidal categories of $3\Gr(Q,\pi)$ and $\Aut(\BK)$. Now that we have built monoidal autoequivalences $(q_*,\psitq)$, we want pseudonatural isomorphisms 
\[(U_{q,r},\chi_{q,r}):q_*\circ r_*\Rightarrow (qr)_*\] 
for all $q,r\in Q$. Note that $\q\ar=\gamma(q,r)\qr$ for a uniquely determined (by the lift) function $$\gamma:Q\times Q\rightarrow K.$$
\begin{definition}
    For any $q,r\in Q$ we define maps
    \[\chit_{q,r}[V,k]:=\lambda\big([\chi\inv_{\gamma(q,r),\qr},\gamma(q,r)k^{\qr}]\circ[\chi_{\q,\ar}(V),\gamma(q,r) k^{\qr}]\big): q_* r_*[V,k]\cdot U_{q,r}\rightarrow U_{q,r}\cdot (qr)_*[V,k
    ]\]
    where  $$\lambda=\frac{\omega(\q,\ar,k\ar\inv\q\inv)\omega(k,\qr\inv,\gamma\inv)\omega(\qr,k\qr\inv,\gamma\inv)}{\omega(\ar,k\ar\inv,\q\inv)\omega(k,\ar\inv,\q\inv)\omega(\gamma,\qr,k\qr\inv \gamma\inv)\omega(\gamma,k^{\qr}\gamma\inv,\gamma)\omega(k^{\qr},\gamma\inv,\gamma)}$$
    referring to $\gamma(q,r)$ as $\gamma$ for brevity. We also define $U_{q,r}:=[I,\gamma(q,r)]$, for all $q,r\in Q$.
\end{definition}

\begin{lemma}
    For any $q,r\in Q$, the maps $(U_{q,r},\chit_{q,r})$ have the structure of a linear pseudonatural isomorphism of monoidal autoequivalences of $\BK$ 
    \begin{align*}
        \chit_{q,r}:q_* r_*&\Rightarrow (qr)_*\\
        q_* r_*[V,k]\cdot U_{q,r} &\mapsto U_{q,r}\cdot (qr)_*[V,k].
    \end{align*}
    
\end{lemma}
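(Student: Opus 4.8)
The plan is to verify, in turn, the data and axioms that make $(U_{q,r},\chit_{q,r})$ a linear pseudonatural isomorphism in $\Aut(\BK)$: that the $1$-morphism $U_{q,r}$ is invertible, that the $\chit_{q,r}[V,k]$ form a natural family of isomorphisms which is trivial at the monoidal unit, and that this family satisfies the pseudonaturality hexagon for composition of $1$-morphisms of $\BK$ (the one in which the monoidal structure of the composite $q_* r_*$ and the structure $\psit^{qr}$ occur). Invertibility of $U_{q,r}=[I,\gamma(q,r)]$ is immediate, with weak inverse $[I,\gamma(q,r)\inv]$, since $\gamma(q,r)\in K$ and the $G$-action fixes $1_\B$. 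That each $\chit_{q,r}[V,k]$ is an isomorphism is equally clear: it is the scalar $\lambda$, a ratio of values of the $\kt$-valued cochain $\omega$ and hence an element of $\kt$, times the composite of the isomorphisms $\chi_{\q,\ar}(V)$ and $\chi_{\gamma(q,r),\qr}(V)\inv$ --- both pieces of the structure data of the $G$-action on $\B$ --- carried in $K$-component $\gamma(q,r)k^{\qr}$. The one point to check is that source and target agree: using $\q\ar=\gamma(q,r)\qr$ and normality of $K$ in $G$, both $q_* r_*[V,k]\cdot U_{q,r}$ and $U_{q,r}\cdot(qr)_*[V,k]$ are supported in $K$-component $\gamma(q,r)k^{\qr}$, and on $\B$-components $\chit_{q,r}$ realises $\q_*\ar_* V\xrightarrow{\chi_{\q,\ar}(V)}(\q\ar)_* V=(\gamma(q,r)\qr)_* V\xrightarrow{\chi_{\gamma(q,r),\qr}(V)\inv}\gamma(q,r)_*\qr_* V$. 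As in the preceding lemma, the scalar $\lambda$ is chosen to be precisely the coefficient of a coherence isomorphism in the $\underline{G}$-module category $\bigl(\vek(K,\omega),\,g_* k=g\otimes(k\otimes g\inv)\bigr)$.

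Naturality of $\chit_{q,r}$ in $[V,k]$ is routine: every morphism out of $[V,k]$ in $\BK$ has the form $[f,k]$, so the scalar $\lambda$, which depends on $k$ alone, is shared by the two legs of the naturality square, which then commutes by naturality of $\chi_{\q,\ar}$ and $\chi_{\gamma(q,r),\qr}$. For the unit normalization $\chit_{q,r}[1_\B,e]=\id_{U_{q,r}}$, the two $\chi$-components become $\id_I$ by unitality of the coherence data of the $G$-action, and the surviving scalar $\lambda|_{k=e}$ equals $1$ because, by the observation just made, it is the coefficient of a coherence isomorphism of $\vek(K,\omega)$ whose source and target both specialise at $k=e$ to the object $\gamma(q,r)$; a coherence endomorphism of an object is the identity by Mac Lane's coherence theorem, so its coefficient is $1$. (Alternatively this follows directly from normality of $\omega$ together with the identity (\ref{eq2.1}) and $\rho(\gamma(q,r))=e$.)

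The substantive step is the pseudonaturality hexagon: for $1$-morphisms $S=[V,k]$ and $T=[W,l]$ of $\BK$ one must show that the hexagon relating $\chit_{q,r}(S\otimes T)$, $\chit_{q,r}(S)$, $\chit_{q,r}(T)$, the monoidal structure of the composite autoequivalence $q_* r_*$ (built from $\psit^q$ and $\psit^r$) and the structure $\psit^{qr}$ commutes. Expanding into $\B$-components produces a large diagram, of the same flavour and comparable size to the one drawn in the preceding lemma; it decomposes into (i) sub-polygons built only from associators $\alpha^\B$, functoriality squares and naturality squares for $\psi$ and $\chi$, together with the coherence axioms (\ref{eq1.1}), (\ref{eq1.2}) and (\ref{eq1.3}) of the $G$-action on $\B$ --- all commuting by those axioms and Mac Lane's coherence theorem --- and (ii) one identity among values of $\omega$, coming from matching the $\kt$-coefficients of the two legs. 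For (ii) I would again transport into $\vek(K,\omega)$: each $\omega$-scalar occurring (the $\lambda$ of $\chit_{q,r}$, the prefactors of $\psit^q,\psit^r,\psit^{qr}$, and the twist $\omega(\sigma,\tau,\rho)$ of $\alpha^\omega$) is the coefficient of a coherence isomorphism --- an associator, one of the maps $\psi(g)_{k,l}$, or a $\chi$ --- in $\vek(K,\omega)$, and since in that category all of these are composites of associators, Mac Lane's coherence theorem forces the two scalar legs to coincide, so (ii) holds with no cocycle bookkeeping. (Alternatively one may establish (ii) directly by iterated application of (\ref{eq2.1}), threading $\q\ar=\gamma(q,r)\qr$ through the computation, in the style of the commented calculation in the preceding lemma.) Linearity is then automatic: $\chit_{q,r}$ is a fixed scalar times a composite of structure $2$-morphisms, and composition of modifications in $\Aut(\BK)$ intertwines the $\kt$-action by construction.

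The main obstacle is part (ii) of the hexagon step: both laying out the (very large) $\B$-component diagram so that the commuting sub-polygons are legible, and then certifying the scalar identity. The transport-to-$\vek(K,\omega)$ reduction is exactly what keeps the scalar check from degenerating into an unmanageable cocycle manipulation, so setting it up cleanly is the real content of the argument.
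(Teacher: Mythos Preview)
Your proposal is correct and follows essentially the same approach as the paper: verify naturality from naturality of the underlying $\chi$'s, then expand the pseudonaturality axiom (the paper's diagram (\ref{axiom for chi})) into a large $\B$-component diagram whose sub-polygons commute by the $G$-action axioms (\ref{eq1.1})--(\ref{eq1.3}), and finally match the $\kt$-coefficients by transporting to $\vek(K,\omega)$ and invoking Mac Lane's coherence theorem. You are slightly more explicit than the paper about the preliminary checks (invertibility of $U_{q,r}$, source/target matching, unit normalization), but the substantive argument is the same.
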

\bp Pseudonaturality of $\chit$ requires the following diagram to commute
\[\begin{tikzcd}
	{q_* r_*[V,k]\cdot U_{q,r}} & {U_{q,r}\cdot (qr)_*[V,k]} \\
	{q_* r_*[W,k]\cdot U_{q,r}} & {U_{q,r}\cdot (qr)_*[W,k]}
	\arrow["{\tilde{\chi}}", from=1-1, to=1-2]
	\arrow["{q_* r_*[f,k]\otimes 1}"', from=1-1, to=2-1]
	\arrow["{1\otimes(qr)_*[f,k]}", from=1-2, to=2-2]
	\arrow["{\tilde{\chi}}"', from=2-1, to=2-2]
\end{tikzcd}\]
This follows from the fact that $\chi$, from the $G$-action on $\B$, is natural. In order for $\chit_{q,r}$ to be a pseudonatural isomorphism we need the following diagram to commute

\begin{equation}\label{axiom for chi}  
\begin{tikzcd}
	{(q_* r_*[V,k]\cdot q_* r_*[W,l])\cdot U_{q,r}} && {q_* (r_*[V,k]\cdot r_*[W,l])\cdot U_{q,r}} \\
	{q_* r_*[V,k]\cdot (q_* r_*[W,l]\cdot U_{q,r})} \\
	{q_* r_*[V,k]\cdot(U_{q,r}\cdot  (qr)_*[W,l])} && {q_* r_*([V,k]\cdot [W,l])\cdot U_{q,r}} \\
	{(q_* r_*[V,k]\cdot U_{q,r})\cdot  (qr)_*[W,l]} \\
	{(U_{q,r}\cdot(qr)_*[V,k])\cdot (qr)_*[W,l]} \\
	{U_{q,r}\cdot((qr)_*[V,k]\cdot (qr)_*[W,l])} && {U_{q,r}\cdot(qr)_*([V,k]\cdot[W,l])}
	\arrow["{\psit\otimes\text{id}}"', from=1-1, to=1-3]
	\arrow["{(\alpha^{\omega})\inv}", from=1-1, to=2-1]
	\arrow["{q_*(\psit)\otimes\text{id}}"', from=1-3, to=3-3]
	\arrow["{\text{id}\otimes \chit}", from=2-1, to=3-1]
	\arrow["{\alpha^{\omega}}", from=3-1, to=4-1]
	\arrow["\chit"', from=3-3, to=6-3]
	\arrow["{\chit\otimes\text{id}}", from=4-1, to=5-1]
	\arrow["{(\alpha^{\omega})^{-1}}", from=5-1, to=6-1]
	\arrow["{\text{id}\otimes\psit}", from=6-1, to=6-3]
\end{tikzcd}\end{equation}
We note that because $\B$ is unital and $U_{q,r}=[I,\gamma(q,r)]$ that by definition the first associator in the above diagram is simply scalar times identity. We note the reduced composites of the other two associators below
\[
    \alpha^{\omega}(q_* r_*[V,k],U_{q,r},(qr)_*[W,l])=[1\otimes \chi_{k^{\q\ar},\gamma(q,r)},\gamma(q,r)(kl)^{\qr}]\]
\[
    \alpha^{\omega}(U_{q,r},(qr)_*[V,k],(qr)_*[W,l])\inv=[(\psi^{\gamma(q,r)}_{\qr_* V,\qr_* W})\inv,\gamma(q,r)(kl)^{\qr}]\inv \circ [1\otimes \chi_{\gamma(q,r),k^{\qr}},\gamma(q,r)(kl)^{\qr}]\inv.
\]
Computing the first term of diagram (\ref{axiom for chi}) gives 
\[(q_* r_*[V,k]\cdot q_* r_*[W,l])\cdot U_{q,r}=[\q_* \ar_*V\otimes k^{\q\ar}_*\q_* \ar_* W,\gamma(q,r)(kl)^{\qr}].\]
The $K$-component stays unchanged throughout the diagram, thus we only must consider the $\B$-component. Expanding our composites gives the diagram 
\[
\begin{tikzcd}
	{\widehat{q}_* \widehat{r}_*V\otimes (k^{\widehat{q}\widehat{r}})_*\widehat{q}_* \widehat{r}_*W} & {\widehat{q}_* \widehat{r}_*V\otimes (\widehat{q}k^{\widehat{r}})_* \widehat{r}_*W} \\
	{\widehat{q}_* \widehat{r}_*V\otimes (k^{\widehat{q}\widehat{r}})_*(\widehat{q}\widehat{r})_*W} & {\widehat{q}_* \widehat{r}_*V\otimes \widehat{q}_*(k^{\widehat{r}})_* \widehat{r}_*W} \\
	{\widehat{q}_* \widehat{r}_*V\otimes (k^{\widehat{q}\widehat{r}})_*\gamma(q,r)_*(\widehat{qr})_*W} & {\widehat{q}_* (\widehat{r}_*V\otimes (k^{\widehat{r}})_* \widehat{r}_*W)} \\
	{\widehat{q}_* \widehat{r}_*V\otimes (\gamma(q,r)k^{\widehat{qr}})_*(\widehat{qr})_*W} & {\widehat{q}_* (\widehat{r}_*V\otimes (\widehat{r}k)_* W)} \\
	{(\widehat{q}\widehat{r})_*V\otimes (\gamma(q,r)k^{\widehat{qr}})_*(\widehat{qr})_*W} & {\widehat{q}_* (\widehat{r}_*V\otimes \widehat{r}_* k_* W)} \\
	{\gamma(q,r)_*\widehat{qr}_*V\otimes (\gamma(q,r)k^{\widehat{qr}})_*(\widehat{qr})_*W} & {\widehat{q}_* \widehat{r}_*(V\otimes  k_* W)} \\
	{\gamma(q,r)_*\widehat{qr}_*V\otimes \gamma(q,r)_* k^{\widehat{qr}}_*(\widehat{qr})_*W} & {(\widehat{q}\widehat{r})_*(V\otimes  k_* W)} \\
	{\gamma(q,r)_*(\widehat{qr}_*V\otimes k^{\widehat{qr}}_*(\widehat{qr})_*W)} \\
	{\gamma(q,r)_*(\widehat{qr}_*V\otimes (\widehat{qr}k)_* W)} \\
	{\gamma(q,r)_*(\widehat{qr}_*V\otimes \widehat{qr}_* k_* W)} & {\gamma(q,r)_*\widehat{qr}_*(V\otimes  k_* W)}
	\arrow["{1\otimes \chi}", from=1-1, to=1-2]
	\arrow["{1\otimes (k^{\widehat{q}\widehat{r}})_*(\chi)}"', from=1-1, to=2-1]
	\arrow["{1\otimes \chi^{-1}}", from=1-2, to=2-2]
	\arrow["{1\otimes (k^{\widehat{q}\widehat{r}})_*(\chi^{-1})}"', from=2-1, to=3-1]
	\arrow["\psi", from=2-2, to=3-2]
	\arrow["{1\otimes\chi}"', from=3-1, to=4-1]
	\arrow["{q_*(1\otimes\chi)}", from=3-2, to=4-2]
	\arrow["{\chi\otimes 1}"', from=4-1, to=5-1]
	\arrow["{q_*(1\otimes\chi^{-1})}", from=4-2, to=5-2]
	\arrow["{\chi^{-1}\otimes 1}"', from=5-1, to=6-1]
	\arrow["{q_*(\psi)}", from=5-2, to=6-2]
	\arrow["{1\otimes \chi^{-1}}"', from=6-1, to=7-1]
	\arrow["\chi", from=6-2, to=7-2]
	\arrow["\psi"', from=7-1, to=8-1]
	\arrow["{\chi^{-1}}", from=7-2, to=10-2]
	\arrow["{\gamma(q,r)_*(1\otimes\chi)}"', from=8-1, to=9-1]
	\arrow["{\gamma(q,r)_*(1\otimes\chi^{-1})}"', from=9-1, to=10-1]
	\arrow["{\gamma(q,r)_*(\psi)}"', from=10-1, to=10-2]
\end{tikzcd}\]
Below we make use of the commutativity of the following diagram in $\B$
\[\begin{tikzcd}
	{q_*r_* V\otimes q_* r_* W} & {(qr)_* V\otimes q_* r_* W} & {(qr)_* V\otimes (qr)_* W}
	\arrow["{\chi\otimes \text{id}}"', from=1-1, to=1-2]
	\arrow["{\text{id}\otimes \chi}"', from=1-2, to=1-3]
	\arrow["\chi\otimes\chi", curve={height=-24pt}, from=1-1, to=1-3]
\end{tikzcd}\]
Using the same color-coding and space-saving measures as before, we see that the above diagram commutes

\[\begin{tikzcd}[cramped,scale cd=0.5,column sep = 0.25em, center picture]
	&& {\widehat{q}_*\widehat{r}_* V\otimes k^{\widehat{q}\widehat{r}}_*\widehat{q}_*\widehat{r}_* W} & {\widehat{q}_*\widehat{r}_* V\otimes (\widehat{q}\widehat{r}k\widehat{r}^{-1})_*\widehat{r}_* W} \\
	&&&& {\widehat{q}_*\widehat{r}_* V\otimes \widehat{q}_*(\widehat{r}k\widehat{r}^{-1})_*\widehat{r}_* W} & {\widehat{q}_*(\widehat{r}_* V\otimes (\widehat{r}k\widehat{r}^{-1})_*\widehat{r}_* W)} \\
	& {(\widehat{q}\widehat{r})_* V\otimes k^{qr}_*(\widehat{q}\widehat{r})_* W} \\
	\\
	{\gamma(q,r)_*\widehat{qr}_* V\otimes k^{qr}_*\gamma(q,r)_*\widehat{qr}_* W} &&& {(\widehat{q}\widehat{r})_* V\otimes (\widehat{q}\widehat{r}k)_* W} &&& {\widehat{q}_*(\widehat{r}_* V\otimes (\widehat{r}k)_* W)} \\
	&&&& {(\widehat{q}\widehat{r})_* V\otimes \widehat{q}_*(\widehat{r}k)_* W} & {\widehat{q}_*\widehat{r}_* V\otimes \widehat{q}_*(\widehat{r}k)_* W} \\
	&&& {(\widehat{q}\widehat{r})_* V\otimes (\widehat{q}\widehat{r})_*k_* W} \\
	{\gamma(q,r)_*\widehat{qr}_* V\otimes (\gamma(q,r)k^{\widehat{qr}})_*\widehat{qr}_* W} && {\gamma(q,r)_*\widehat{qr}_* V\otimes (\widehat{q}\widehat{r})_*(k\widehat{qr}^{-1})_*\widehat{qr}_* W} \\
	&&&& {(\widehat{q}\widehat{r})_* V\otimes \widehat{q}_*\widehat{r}_*k_* W} & {\widehat{q}_*\widehat{r}_* V\otimes \widehat{q}_* \widehat{r}_* k_* W} && {\widehat{q}_*(\widehat{r}_* V\otimes \widehat{r}_* k_* W)} \\
	\\
	& {\gamma(q,r)_*\widehat{qr}_* V\otimes (\gamma(q,r)\widehat{qr}k)_* W} & {\gamma(q,r)_*\widehat{qr}_* V\otimes (\widehat{q}\widehat{r})_* k_* W} &&&& {\fbox{P.I}} \\
	&&& {(\widehat{q}\widehat{r})_* V\otimes (\widehat{q}\widehat{r}k)_* W} && {(\widehat{q}\widehat{r})_* V\otimes (\widehat{q}\widehat{r})_* k_* W} && {\widehat{q}_*\widehat{r}_* (V\otimes  k_* W)} \\
	&& {\gamma(q,r)_*\widehat{qr}_* V\otimes \gamma(q,r)_*\widehat{qr}_* k_* W} && {\gamma(q,r)_*\widehat{qr}_* V\otimes \gamma(q,r)_*\widehat{qr}_* k_* W} \\
	& {\gamma(q,r)_*\widehat{qr}_* V\otimes \gamma(q,r)_*(\widehat{qr}k)_* W} \\
	\\
	&&& {\gamma(q,r)_*\widehat{qr}_* V\otimes \gamma(q,r)_*(\widehat{qr}k)_* W} && {\fbox{P.II}} && {(\widehat{q}\widehat{r})_* (V\otimes  k_* W)} \\
	& {\gamma(q,r)_*\widehat{qr}_* V\otimes \gamma(q,r)_*(k^{\widehat{qr}})_*\widehat{qr}_* W} \\
	\\
	&& {\gamma(q,r)_*(\widehat{qr}_* V\otimes (k^{\widehat{qr}})_*\widehat{qr}_* W)} &&&& {\gamma(q,r)_*\widehat{qr}_* (V\otimes k_*W)} \\
	&&&& {\gamma(q,r)_*(\widehat{qr}_* V\otimes \widehat{qr}_*k_*W)} \\
	&&& {\gamma(q,r)_*(\widehat{qr}_* V\otimes (\widehat{qr}k)_*W)}
	\arrow["{\chi\otimes k^{qr}_*(\chi)}"{description}, color={rgb,255:red,92;green,92;blue,214}, from=1-3, to=3-2]
	\arrow["{\chi^{-1}\otimes k^{qr}_*(\chi^{-1})}"{description}, color={rgb,255:red,92;green,92;blue,214}, from=3-2, to=5-1]
	\arrow["{1\otimes\chi^{-1}}"', color={rgb,255:red,92;green,92;blue,214}, curve={height=30pt}, from=8-1, to=17-2]
	\arrow["\psi"{description}, color={rgb,255:red,214;green,92;blue,92}, from=17-2, to=19-3]
	\arrow["{\gamma(q,r)_*(1\otimes \chi)}"{description}, color={rgb,255:red,92;green,92;blue,214}, from=19-3, to=21-4]
	\arrow["{\gamma(q,r)_*(1\otimes \chi^{-1})}"{description}, color={rgb,255:red,92;green,92;blue,214}, from=21-4, to=20-5]
	\arrow["{\gamma(q,r)_*(\psi)}"{description}, color={rgb,255:red,214;green,92;blue,92}, from=20-5, to=19-7]
	\arrow["{1\otimes \chi}", color={rgb,255:red,92;green,92;blue,214}, from=1-3, to=1-4]
	\arrow["{1\otimes \chi^{-1}}"{description}, color={rgb,255:red,92;green,92;blue,214}, from=1-4, to=2-5]
	\arrow["\psi", color={rgb,255:red,214;green,92;blue,92}, from=2-5, to=2-6]
	\arrow["{\widehat{q}_*(1\otimes\chi)}"{description}, color={rgb,255:red,92;green,92;blue,214}, from=2-6, to=5-7]
	\arrow["{\widehat{q}_*(1\otimes\chi^{-1})}"{description}, color={rgb,255:red,92;green,92;blue,214}, from=5-7, to=9-8]
	\arrow["{\widehat{q}_*(\psi)}", color={rgb,255:red,214;green,92;blue,92}, from=9-8, to=12-8]
	\arrow["\chi", color={rgb,255:red,92;green,92;blue,214}, from=12-8, to=16-8]
	\arrow["{\chi^{-1}}"{description}, color={rgb,255:red,92;green,92;blue,214}, from=16-8, to=19-7]
	\arrow["{1\otimes \chi}"', color={rgb,255:red,92;green,92;blue,214}, from=5-1, to=8-1]
	\arrow["{\chi\otimes \chi}"', color={rgb,255:red,92;green,92;blue,214}, from=1-4, to=5-4]
	\arrow["1\otimes\chi"{description}, color={rgb,255:red,92;green,92;blue,214}, from=3-2, to=5-4]
	\arrow["{\chi^{-1}\otimes\chi^{-1}}"{description}, color={rgb,255:red,92;green,92;blue,214}, from=5-4, to=8-1]
	\arrow["\chi\otimes\chi"', color={rgb,255:red,92;green,92;blue,214}, from=2-5, to=6-5]
	\arrow["{1\otimes \widehat{q}_*(\chi^{-1})}"{description}, color={rgb,255:red,92;green,92;blue,214}, from=5-4, to=6-5]
	\arrow["{\chi^{-1}\otimes 1}", color={rgb,255:red,92;green,92;blue,214}, from=6-5, to=6-6]
	\arrow["\psi"{description}, color={rgb,255:red,214;green,92;blue,92}, from=6-6, to=5-7]
	\arrow["{1\otimes \chi}"{description}, color={rgb,255:red,92;green,92;blue,214}, from=2-5, to=6-6]
	\arrow["{1\otimes \widehat{q}_*(\chi^{-1})}", color={rgb,255:red,92;green,92;blue,214}, from=6-6, to=9-6]
	\arrow["\psi", color={rgb,255:red,214;green,92;blue,92}, from=9-6, to=9-8]
	\arrow["\chi\otimes\chi", color={rgb,255:red,92;green,92;blue,214}, from=9-6, to=12-6]
	\arrow["\psi"{description}, color={rgb,255:red,214;green,92;blue,92}, from=12-6, to=16-8]
	\arrow["\chi\otimes\chi"{description}, color={rgb,255:red,92;green,92;blue,214}, from=13-5, to=12-6]
	\arrow["\psi", color={rgb,255:red,214;green,92;blue,92}, from=13-5, to=20-5]
	\arrow["{1\otimes\chi^{-1}}"', color={rgb,255:red,92;green,92;blue,214}, from=6-5, to=9-5]
	\arrow["{\chi^{-1}\otimes 1}", color={rgb,255:red,92;green,92;blue,214}, from=9-5, to=9-6]
	\arrow["{1\otimes \chi}"{description}, color={rgb,255:red,92;green,92;blue,214}, from=9-5, to=12-6]
	\arrow["{1\otimes\chi^{-1}}", color={rgb,255:red,92;green,92;blue,214}, from=5-4, to=7-4]
	\arrow["{1\otimes\chi^{-1}}"{description}, color={rgb,255:red,92;green,92;blue,214}, from=7-4, to=9-5]
	\arrow["{\chi^{-1}\otimes\chi^{-1}}"{description}, color={rgb,255:red,92;green,92;blue,214}, from=7-4, to=13-5]
	\arrow["{1\otimes \gamma(q,r)_*(\chi)}"{description}, color={rgb,255:red,92;green,92;blue,214}, from=17-2, to=16-4]
	\arrow["\psi"', color={rgb,255:red,214;green,92;blue,92}, from=16-4, to=21-4]
	\arrow["{1\otimes \gamma(q,r)_*(\chi^{-1})}"{description}, color={rgb,255:red,92;green,92;blue,214}, from=16-4, to=13-5]
	\arrow["{1\otimes\chi^{-1}}"{description}, color={rgb,255:red,92;green,92;blue,214}, from=8-1, to=8-3]
	\arrow["{\chi^{-1}\otimes (\widehat{q}\widehat{r})_*(\chi^{-1})}"{description}, color={rgb,255:red,92;green,92;blue,214}, from=7-4, to=8-3]
	\arrow["{1\otimes\chi^{-1}}", color={rgb,255:red,92;green,92;blue,214}, from=12-4, to=7-4]
	\arrow["{\chi^{-1}\otimes\chi^{-1}}", color={rgb,255:red,92;green,92;blue,214}, from=12-4, to=16-4]
	\arrow["{1\otimes (\widehat{q}\widehat{r})_*(\chi)}"', color={rgb,255:red,92;green,92;blue,214}, from=8-3, to=11-3]
	\arrow["{\chi^{-1}\otimes\chi^{-1}}"{description}, color={rgb,255:red,92;green,92;blue,214}, curve={height=-12pt}, from=7-4, to=13-3]
	\arrow["{1\otimes\gamma(q,r)_*(\chi^{-1})}"{description}, color={rgb,255:red,92;green,92;blue,214}, from=16-4, to=13-3]
	\arrow["1\otimes\chi", color={rgb,255:red,92;green,92;blue,214}, from=13-3, to=11-3]
	\arrow["{1\otimes\gamma(q,r)_*(\chi^{-1})}"{description}, color={rgb,255:red,92;green,92;blue,214}, from=14-2, to=13-3]
	\arrow["{1\otimes\gamma(q,r)_*(\chi)}"', color={rgb,255:red,92;green,92;blue,214}, from=17-2, to=14-2]
	\arrow["1\otimes\chi"{description}, color={rgb,255:red,92;green,92;blue,214}, from=8-1, to=11-2]
	\arrow["{1\otimes\chi^{-1}}", color={rgb,255:red,92;green,92;blue,214}, from=11-2, to=11-3]
	\arrow["{1\otimes\chi^{-1}}"', color={rgb,255:red,92;green,92;blue,214}, from=11-2, to=14-2]
\end{tikzcd}\]
\newpage

The commutativity of each square and triangle is obvious to see and the pentagons, \fbox{P.I} and \fbox{P.I},  follow from the requirement that $\chi_{q,r}$ are isomorphisms of monoidal functors, like before.

Lastly, we need the $\kt$-coefficients from the left side of the diagram to equal the coefficients from the right side. We go back to considering the $\underline{G}$-action on $\vek(K,\omega)$. We now consider the pseudonatural isomorphisms $(\chi_{\q,\ar},\gamma(q,r)):\q_*\ar_*\rightarrow \qr_*$ where 
\[\chi_{\q,\ar}(k):\q_*\ar_* k \otimes \gamma(q,r)\Rightarrow \gamma(q,r)\otimes \qr_* k.\] For any $k$, this gives the diagram
\[\begin{tikzcd}[scale cd=0.95, center picture]
	{\Big(\widehat{q}\otimes\Big(\big(\widehat{r}\otimes(k\otimes \widehat{r}^{-1})\big)\otimes \widehat{q}^{-1}\Big)\Big)\otimes\gamma(q,r)} & {\Big(\widehat{q}\otimes\Big(\widehat{r}\otimes\big((k\otimes \widehat{r}^{-1})\otimes \widehat{q}^{-1}\big)\Big)\Big)\otimes \gamma(q,r)} \\
	{\Big(\Big(\widehat{q}\otimes\big(\widehat{r}\otimes(k\otimes \widehat{r}^{-1})\big)\Big)\otimes \widehat{q}^{-1}\Big)\otimes\gamma(q,r)} & {\Big(\widehat{q}\otimes\Big(\widehat{r}\otimes\big(k\otimes (\widehat{qr}^{-1}\otimes \gamma(q,r)^{-1})\big)\Big)\Big)\otimes\gamma(q,r)} \\
	{\Big(\Big((\gamma(q,r)\otimes\widehat{qr})\otimes(k\otimes \widehat{r}^{-1})\Big)\otimes \widehat{q}^{-1}\Big)\otimes\gamma(q,r)} & {\Big(\big(\gamma(q,r)\otimes\widehat{qr}\big)\otimes\big(k\otimes (\widehat{qr}^{-1}\otimes \gamma(q,r)^{-1})\big)\Big)\otimes\gamma(q,r)} \\
	{\Big((\gamma(q,r)\otimes\widehat{qr})\otimes\Big((k\otimes \widehat{r}^{-1})\otimes \widehat{q}^{-1}\Big)\Big)\otimes\gamma(q,r)} & {\big(\gamma(q,r)\otimes\widehat{qr}\big)\otimes\Big(\big(k\otimes (\widehat{qr}^{-1}\otimes \gamma(q,r)^{-1})\big)\otimes\gamma(q,r)\Big)} \\
	{\Big((\gamma(q,r)\otimes\widehat{qr})\otimes\Big(k\otimes (\widehat{qr}^{-1}\otimes \gamma(q,r)^{-1})\Big)\Big)\otimes\gamma(q,r)} & {\big(\gamma(q,r)\otimes\widehat{qr}\big)\otimes\Big(k\otimes \big((\widehat{qr}^{-1}\otimes \gamma(q,r)^{-1})\otimes\gamma(q,r)\big)\Big)} \\
	{\Big((\gamma(q,r)\otimes\widehat{qr})\otimes\Big((k\otimes \widehat{qr}^{-1})\otimes \gamma(q,r)^{-1}\Big)\Big)\otimes\gamma(q,r)} & {\big(\gamma(q,r)\otimes\widehat{qr}\big)\otimes\big(k\otimes \widehat{qr}^{-1}\big)} \\
	{(\gamma(q,r)\otimes\widehat{qr})\otimes\Big(\Big((k\otimes \widehat{qr}^{-1})\otimes \gamma(q,r)^{-1}\Big)\otimes\gamma(q,r)\Big)} \\
	{(\gamma(q,r)\otimes\widehat{qr})\otimes(k\otimes \widehat{qr}^{-1})} & {\gamma(q,r)\otimes\big(\widehat{qr}\otimes(k\otimes \widehat{qr}^{-1})\big)}
	\arrow["{1\otimes\omega(\widehat{r},k\widehat{r}^{-1},\widehat{q}^{-1})^{-1}\otimes 1}", curve={height=-18pt}, from=1-1, to=1-2]
	\arrow["{\omega(\widehat{q},k^{\widehat{r}},\widehat{q}^{-1})\otimes 1}"', from=1-1, to=2-1]
	\arrow["{1\otimes 1\otimes \omega(k,\widehat{r}^{-1},\widehat{q}^{-1})^{-1}\otimes 1}", from=1-2, to=2-2]
	\arrow["{\omega(\widehat{q},\widehat{r},k\widehat{r}^{-1})\otimes 1}"', from=2-1, to=3-1]
	\arrow["{\omega(\widehat{q},\widehat{r},k\widehat{qr}^{-1}\gamma(q,r)^{-1})\otimes 1}", from=2-2, to=3-2]
	\arrow["{\omega(\gamma(q,r)\widehat{qr},k\widehat{r}^{-1},\widehat{q}^{-1})^{-1}\otimes1}"', from=3-1, to=4-1]
	\arrow["{\omega(\gamma(q,r)\widehat{qr},k\widehat{qr}^{-1}\gamma(q,r)^{-1},\gamma(q,r))^{-1}}", from=3-2, to=4-2]
	\arrow["{1\otimes\omega(k,\widehat{r}^{-1},\widehat{q}^{-1})^{-1}\otimes 1}"', shift right, from=4-1, to=5-1]
	\arrow["{1\otimes\omega(k,\widehat{qr}^{-1}\gamma(q,r)^{-1},\gamma(q,r))^{-1}}", from=4-2, to=5-2]
	\arrow["{1\otimes\omega(k,\widehat{qr}^{-1},\gamma(q,r)^{-1})\otimes 1}"', from=5-1, to=6-1]
	\arrow["{1\otimes 1\otimes\omega(\widehat{qr}^{-1},\gamma(q,r)^{-1},\gamma(q,r))}", from=5-2, to=6-2]
	\arrow["{\omega(\gamma(q,r)\widehat{qr},k\widehat{qr}^{-1}\gamma(q,r)^{-1},\gamma(q,r))^{-1}}"', from=6-1, to=7-1]
	\arrow["{\omega(\gamma(q,r),\widehat{qr},k\widehat{qr}^{-1})}", from=6-2, to=8-2]
	\arrow["{1\otimes\omega(k\widehat{qr}^{-1},\gamma(q,r)^{-1},\gamma(q,r))^{-1}}"', from=7-1, to=8-1]
	\arrow["{\omega(\gamma(q,r),\widehat{qr},k\widehat{qr}^{-1})}"', from=8-1, to=8-2]
\end{tikzcd}\]
which commutes by Mac Lane's coherence theorem \cite{Maclane1971-MACCFT}. We have picked the coefficient of each $\chit$ to correspond to the right side of the above diagram.

Now, considering that the coefficients of each morphism in (\ref{axiom for chi}) are defined as the corresponding coefficients of structure morphisms from the $\underline{G}$-action on $\vek(K,\omega)$, we see that, because each coefficient may be viewed as an associator in $\vek(K,\omega)$, the coefficients on either side of the diagram must agree again by Mac Lane.
\epp

\newpage

\begin{definition}
    For any $q,r,s\in Q$, we define maps
\[\Omega_{q,r,s}:=\omega(\q,\ar,\s)\id:(\alpha^{\omega}_{q,r,s})\circ\chit_{q,rs}\circ (\id_{q_*}\otimes \chit_{r,s})\Rrightarrow \chit_{qr,s}\circ (\chit_{q,r}\otimes \id_{s_*})\]\
i.e.
\[\begin{tikzcd}
	{q_*r_*s_*} & {(qr)_*s_*} \\
	{q_*(rs)_*} \\
	{(q(rs))_*} & {((qr)s)_*}
	\arrow["{\chi_{q,r}\otimes \text{id}}", Rightarrow, from=1-1, to=1-2]
	\arrow["{\text{id}\otimes \chi_{r,s}}"', Rightarrow, from=1-1, to=2-1]
	\arrow["{\chi_{qr,s}}", Rightarrow, from=1-2, to=3-2]
	\arrow["{\chi_{q,rs}}"', Rightarrow, from=2-1, to=3-1]
	\arrow["{\Omega_{q,r,s}}"{description}, shorten <=8pt, shorten >=8pt, Rightarrow, scaling nfold=3, from=3-1, to=1-2]
	\arrow["{(\alpha^{\omega}_{q,r,s})_*}"', Rightarrow, from=3-1, to=3-2]
\end{tikzcd}\]
\end{definition}

\begin{lemma}
    For any $q,r,s\in Q$, the maps $\Omega_{q,r,s}$ have the structure of invertible modifications of pseudonatural isomorphisms of monoidal autoequivalences of $\BK$.
\end{lemma}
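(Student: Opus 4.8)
The statement to be proved has two parts: that $\Omega_{q,r,s}=\omega(\q,\ar,\s)\id$ is a modification, and that it is invertible. Invertibility is immediate, since the component $\omega(\q,\ar,\s)\id$ is an isomorphism in $\BK$ ($\omega(\q,\ar,\s)$ is a unit of $\kt$, being a value of a $\kt$-valued cochain), with inverse $\omega(\q,\ar,\s)\inv\id$. For the modification axiom, write $A$ for the source pseudonatural isomorphism $(\alpha^{\omega}_{q,r,s})_*\circ\chit_{q,rs}\circ(\id_{q_*}\otimes\chit_{r,s})$ and $B$ for the target $\chit_{qr,s}\circ(\chit_{q,r}\otimes\id_{s_*})$. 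The plan is first to check that $A$ and $B$ have the same underlying $1$-cell, and then, because $\Omega_{q,r,s}$ is a scalar multiple of an identity, to reduce the modification axiom to the equality of the components $A_{[V,k]}$ and $B_{[V,k]}$ for each $[V,k]\in\BK$.

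For the $1$-cells: composing pseudonatural isomorphisms in the usual way, $A$ carries the $1$-cell $q_*(U_{r,s})\cdot U_{q,rs}$ and $B$ carries $U_{q,r}\cdot U_{qr,s}$; since $\B$ is strictly unital and $q_*$ preserves the unit, these equal $[I,\gamma(r,s)^{\q}\gamma(q,rs)]$ and $[I,\gamma(q,r)\gamma(qr,s)]$ respectively, and expanding $\q\ar\s$ as $(\q\ar)\s$ and as $\q(\ar\s)$ using $\q\ar=\gamma(q,r)\qr$ yields $\gamma(q,r)\gamma(qr,s)=\gamma(r,s)^{\q}\gamma(q,rs)$ in $K$, so the two $1$-cells coincide; call the common object $U$. (The factor $(\alpha^{\omega}_{q,r,s})_*$ is the image of the trivial associator of $3\Gr(Q,\pi)$ and contributes no $1$-cell.) Since $\Omega_{q,r,s}$ is a scalar multiple of $\id_U$ and $\BK$ is $\kt$-linear, both whiskerings $\id\otimes\Omega_{q,r,s}$ and $\Omega_{q,r,s}\otimes\id$ in the modification square equal $\omega(\q,\ar,\s)\id$, and as composition intertwines the $\kt$-action the modification axiom collapses to $A_{[V,k]}=B_{[V,k]}$ for all $[V,k]$.

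It remains to prove $A_{[V,k]}=B_{[V,k]}$, which proceeds exactly as in the two preceding lemmas. Expanding both composites, each of $A_{[V,k]}$, $B_{[V,k]}$ is a $\kt$-scalar times a morphism of $\B$ (the ``$\B$-component'') assembled from the structure isomorphisms $\chi$, $\psi$, $\alpha^{\B}$ of the $G$-action on $\B$ via the $\psit$'s, $\chit$'s and $\alpha^{\omega}$'s. On the $\B$-component one draws the comparison diagram and checks it commutes by naturality of the $\chi$'s, functoriality, strict unitality, and the monoidal-functor coherences for $\psi^{\q}$ and for ``$\chi$ is a monoidal natural transformation'' (equations (\ref{eq1.2}) and (\ref{eq1.3}), supplying the $\psi$-hexagons and $\chi$-pentagons), with the remaining polygons closing by Mac Lane's coherence theorem for $\alpha^{\B}$. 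On the $\kt$-scalar level one transports the diagram into $\vek(K,\omega)$ with the $\underline{G}$-action $g_*k:=g\otimes(k\otimes g\inv)$ used above: every scalar occurring in the $\psit$'s, $\chit$'s and $\alpha^{\omega}$'s was defined to be a structure morphism (an associator) of that action, and $\omega(\q,\ar,\s)$ is itself the associator $\alpha^{\omega}_{\q,\ar,\s}$ there, so the required scalar identity is the commutativity of a diagram of associators in $\vek(K,\omega)$ and holds by Mac Lane's coherence theorem. Combining the two parts gives $A_{[V,k]}=B_{[V,k]}$, hence $A=B$ and $\Omega_{q,r,s}$ is a well-defined invertible modification.

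The main obstacle is the size of the $\B$-component diagram: the two composites now interleave the tensorator data $\psit$ with the pseudonatural isomorphisms $\chit$, so the diagram is at least as large as the page-long ones in the preceding lemmas, and the effort lies in laying it out and isolating the few genuinely non-square cells (the $\psi$-hexagon and the $\chi$-pentagons). By contrast the scalar bookkeeping, though lengthy, is conceptually routine once everything is read off as associators in $\vek(K,\omega)$ via Mac Lane.
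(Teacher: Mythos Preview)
Your approach is correct and matches the paper's: check that the $1$-cells of $A$ and $B$ coincide, reduce the modification axiom to $A_{[V,k]}=B_{[V,k]}$ (the scalar $\omega(\q,\ar,\s)$ appears identically on both sides and cancels), then verify equality on the $\B$-component via the $G$-action coherences and on the scalar side by reading everything as associator identities.

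Two small corrections. First, you overestimate the difficulty: because each $U_{q,r}=[I,\gamma(q,r)]$ has trivial $\B$-component and $\B$ is strictly unital, the paper records (equations (15)--(18)) that every $\psit$ with a $U$-argument, every $\alpha^{\omega}$ with two $U$-arguments, and $(\alpha_{q,r,s})_*$ all collapse to a scalar times an identity. So the $\B$-component diagram for this lemma contains only $\chi$'s and is \emph{smaller} than the page-long diagrams of the previous two lemmas, not larger; there are no $\psi$-hexagons to isolate. Second, your remark that ``$\omega(\q,\ar,\s)$ is itself the associator $\alpha^{\omega}_{\q,\ar,\s}$ in $\vek(K,\omega)$'' is both unnecessary (that scalar has already cancelled out of the equation $A=B$) and not literally true, since $\q,\ar,\s$ need not lie in $K$.
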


\bp Let $A=(\alpha^{\omega}_{q,r,s})\circ\chit_{q,rs}\circ (\id_{q_*}\otimes \chit_{r,s})$ and $B=\chit_{qr,s}\circ (\chit_{q,r}\otimes \id_{s_*})$. The composite of pseudonatural isomorphisms is indeed a pseudonatural isomorphism \cite{gurskialgtri}. One may check that the component 1-cells of $A$ and $B$ are $q_* U_{r,s}\cdot U_{q,rs}$ and $U_{q,r}\cdot U_{qr,s}$, respectively. We note
\begin{align*}
    q_* U_{r,s}\cdot U_{q,rs}&=[\q\gamma(r,s)\q\inv]\cdot[\gamma(q,rs)]\\
    &=[\q\ar\s(\widehat{rs})\inv \q\inv]\cdot [\q \widehat{rs}(\widehat{qrs})\inv]\\
    &=U_{q,r}\cdot [\widehat{qr}\s (\widehat{qrs})\inv]\\
    &=U_{q,r}\cdot [\widehat{qr}\s (\gamma(qr,s)\inv \widehat{qr}\s)\inv]\\
    &= U_{q,r}\cdot U_{qr,s}.
\end{align*}
The modification axiom asks 
\[\begin{tikzcd}
	{q_*r_*s_*(\bullet)} && {q_*r_*s_*(\bullet)} && {q_*r_*s_*(\bullet)} && {q_*r_*s_*(\bullet)} \\
	\\
	{((qr)s)_*(\bullet)} && {((qr)s)_*(\bullet)} && {((qr)s)_*(\bullet)} && {((qr)s)_*(\bullet)}
	\arrow[""{name=0, anchor=center, inner sep=0}, "{q_*r_*s_*[V,k]}", from=1-1, to=1-3]
	\arrow[""{name=1, anchor=center, inner sep=0}, "{((qr)s)_*[V,k]}"', from=3-1, to=3-3]
	\arrow["{q_* U_{r,s}\cdot U_{q,rs}}"', curve={height=12pt}, from=1-1, to=3-1]
	\arrow[""{name=2, anchor=center, inner sep=0}, "{q_* U_{r,s}\cdot U_{q,rs}}"', curve={height=18pt}, from=1-3, to=3-3]
	\arrow[""{name=3, anchor=center, inner sep=0}, "{U_{q,r}\cdot U_{qr,s}}"{pos=0.7}, curve={height=-18pt}, from=1-3, to=3-3]
	\arrow[""{name=4, anchor=center, inner sep=0}, "{q_*r_*s_*[V,k]}", from=1-5, to=1-7]
	\arrow[""{name=5, anchor=center, inner sep=0}, "{q_* U_{r,s}\cdot U_{q,rs}}"'{pos=0.3}, curve={height=18pt}, from=1-5, to=3-5]
	\arrow[""{name=6, anchor=center, inner sep=0}, "{U_{q,r}\cdot U_{qr,s}}", curve={height=-18pt}, from=1-5, to=3-5]
	\arrow[""{name=7, anchor=center, inner sep=0}, "{((qr)s)_*[V,k]}"', from=3-5, to=3-7]
	\arrow["{U_{q,r}\cdot U_{qr,s}}", curve={height=-12pt}, from=1-7, to=3-7]
	\arrow["A"'{pos=0.4}, curve={height=18pt}, shorten <=10pt, shorten >=10pt, Rightarrow, from=0, to=1]
	\arrow["\id"', shorten <=7pt, shorten >=7pt, Rightarrow, from=5, to=6]
	\arrow["B"{pos=0.6}, curve={height=-18pt}, shorten <=10pt, shorten >=10pt, Rightarrow, from=4, to=7]
	\arrow["\id", shorten <=7pt, shorten >=7pt, Rightarrow, from=2, to=3]
	\arrow[shorten <=10pt, shorten >=10pt, Rightarrow, no head, from=3, to=5]
\end{tikzcd}\]
for each $[V,k]\in \BK$. The modification axiom is the same as the commutative diagram

\begin{equation}\label{can}
\begin{tikzcd}
	{q_*r_*s_*[V,k]\cdot (q_*U_{r,s}\cdot U_{q,rs})} & {q_*r_*s_*[V,k]\cdot (U_{q,r}\cdot U_{qr,s})} \\
	{(q_*U_{r,s}\cdot U_{q,rs})\cdot((qr)s)_*[V,k]} & {(U_{q,r}\cdot U_{qr,s}) \cdot ((qr)s)_*[V,k]}
	\arrow["{\text{id}}", Rightarrow, from=1-1, to=1-2]
	\arrow["A"', Rightarrow, from=1-1, to=2-1]
	\arrow["B", Rightarrow, from=1-2, to=2-2]
	\arrow["{\text{id}}"', Rightarrow, from=2-1, to=2-2]
\end{tikzcd}
\end{equation}
Clearly $\Omega$ is invertible as $A$ and $B$ are both isomorphisms. To see the above diagram commutes we need to expand $A$ and $B$ into their respective composites:

\begin{equation}\label{Omega axiom}
\begin{tikzcd}
	{q_*r_*s_*[V,k]\cdot (q_*U_{r,s}\cdot U_{q,rs})} & {q_*r_*s_*[V,k]\cdot (U_{q,r}\cdot U_{qr,s})} \\
	{(q_*r_*s_*[V,k]\cdot q_*U_{r,s})\cdot U_{q,rs}} & {(q_*r_*s_*[V,k]\cdot U_{q,r})\cdot U_{qr,s}} \\
	{q_*(r_*s_*[V,k]\cdot U_{r,s})\cdot U_{q,rs}} & {(U_{q,r}\cdot (qr)_*s_*[V,k])\cdot U_{qr,s}} \\
	{q_*(U_{r,s}\cdot (rs)_*[V,k])\cdot U_{q,rs}} & {U_{q,r}\cdot ((qr)_*s_*[V,k]\cdot U_{qr,s})} \\
	{(q_* U_{r,s}\cdot q_*(rs)_*[V,k])\cdot U_{q,rs}} & {U_{q,r}\cdot (U_{qr,s}\cdot (qrs)_*[V,k])} \\
	{q_* U_{r,s}\cdot (q_*(rs)_*[V,k]\cdot U_{q,rs})} & {(U_{q,r}\cdot U_{qr,s}) \cdot (qrs)_*[V,k]} \\
	{q_* U_{r,s}\cdot (U_{q,rs}\cdot (qrs)_*[V,k])} \\
	{(q_* U_{r,s}\cdot U_{q,rs})\cdot (qrs)_*[V,k]} & {(q_*U_{r,s}\cdot U_{q,rs})\cdot(qrs)_*[V,k]}
	\arrow["{\text{id}}", Rightarrow, from=1-1, to=1-2]
	\arrow["{\alpha^{\omega}}"', Rightarrow, from=1-1, to=2-1]
	\arrow["{\alpha^{\omega}}", Rightarrow, from=1-2, to=2-2]
	\arrow["{\tilde{\psi}\otimes \text{id}}"', Rightarrow, from=2-1, to=3-1]
	\arrow["{\tilde{\chi}\otimes\text{id}}", Rightarrow, from=2-2, to=3-2]
	\arrow["{q_*(\tilde{\chi})\otimes\text{id}}"', Rightarrow, from=3-1, to=4-1]
	\arrow["{(\alpha^{\omega})^{-1}}", Rightarrow, from=3-2, to=4-2]
	\arrow["{\tilde{\psi}^{-1}\otimes\text{id}}"', Rightarrow, from=4-1, to=5-1]
	\arrow["{\text{id}\otimes\tilde{\chi}}", Rightarrow, from=4-2, to=5-2]
	\arrow["{(\alpha^{\omega})^{-1}}"', Rightarrow, from=5-1, to=6-1]
	\arrow["{\alpha^{\omega}}", Rightarrow, from=5-2, to=6-2]
	\arrow["{\text{id}\otimes\tilde{\chi}}"', Rightarrow, from=6-1, to=7-1]
	\arrow["{\alpha^{\omega}}"', Rightarrow, from=7-1, to=8-1]
	\arrow["{\text{id}\otimes (\alpha_{q,r,s})_*}"', Rightarrow, from=8-1, to=8-2]
	\arrow["{\text{id}}"', Rightarrow, from=8-2, to=6-2]
\end{tikzcd}
\end{equation}
We have at the level of 2-cells in $\Aut(\BK)$
\begin{align}
    \psitq_{[V,k],[W,l]}&=\left(\frac{\omega(k\q\inv,\q,l\q\inv)\omega(k,l,\q\inv)}{\omega(\q,k\q\inv,l^{\q})\omega(k,\q\inv,\q)}\right)\id,\\
     \psitq_{[V,k],[W,l]}&=\left(\frac{\omega(k\q\inv,\q,l\q\inv)\omega(k,l,\q\inv)}{\omega(\q,k\q\inv,l^{\q})\omega(k,\q\inv,\q)}\right)([\chi_{\q,k}(W)^{-1},klm]\circ[\chi_{k^{\q},\q}(W),klm]),\\
    \alpha^{\omega}_{[V,k],[W,l],[Z,m]}&=\omega(k,l,m)\id,\text{ and }\\
    (\alpha_{q,r,s})_*&=\id 
\end{align}
when 
\begin{enumerate}
    \item[(15)] $W$ is the tensor unit in $\B$,
    \item[(16)] $V$ is the tensor unit in $\B$,
    \item[(17)] any two of $V,W,Z$ are the tensor unit in $\B$, 
    \item[(18)] by definition and functorality.
\end{enumerate}
Expanding composites gives the diagram...
\[\begin{tikzcd}
	{[\widehat{q}_*\widehat{r}_*\widehat{s}_* V,k^{\widehat{q}\widehat{r}\widehat{s}}\gamma(r,s)^{\widehat{q}}\gamma(q,rs)]} & {[\widehat{q}_*\widehat{r}_*\widehat{s}_* V,k^{\widehat{q}\widehat{r}\widehat{s}}\gamma(q,r)\gamma(qr,s)]} \\
	{[\widehat{q}_*(\widehat{r}\widehat{s})_* V,k^{\widehat{q}\widehat{r}\widehat{s}}\gamma(r,s)^{\widehat{q}}\gamma(q,rs)]} & {[(\widehat{q}\widehat{r})_*\widehat{s}_* V,k^{\widehat{q}\widehat{r}\widehat{s}}\gamma(q,r)\gamma(qr,s)]} \\
	{[\widehat{q}_*\gamma(r,s)_*(\widehat{rs})_* V,k^{\widehat{q}\widehat{r}\widehat{s}}\gamma(r,s)^{\widehat{q}}\gamma(q,rs)]} & {[\gamma(q,r)_*(\widehat{qr})_*\widehat{s}_* V,k^{\widehat{q}\widehat{r}\widehat{s}}\gamma(q,r)\gamma(qr,s)]} \\
	{[\widehat{q}_*\gamma(r,s)_*(\widehat{rs})_* V,\gamma(r,s)^{\widehat{q}}k^{\widehat{q}\widehat{rs}}\gamma(q,rs)]} & {U_{q,r}\cdot[(\widehat{qr})_*\widehat{s}_* V,k^{\widehat{qr}\widehat{s}}\gamma(qr,s)]} \\
	{[(\widehat{q}\gamma(r,s))_*(\widehat{rs})_* V,\gamma(r,s)^{\widehat{q}}k^{\widehat{q}\widehat{rs}}\gamma(q,rs)]} & {U_{q,r}\cdot[(\widehat{qr}\widehat{s})_* V,k^{\widehat{qr}\widehat{s}}\gamma(qr,s)]} \\
	{[(\widehat{q}\gamma(r,s)\widehat{q}^{-1})_*\widehat{q}_*(\widehat{rs})_* V,\gamma(r,s)^{\widehat{q}}k^{\widehat{q}\widehat{rs}}\gamma(q,rs)]} & {U_{q,r}\cdot[\gamma(qr,s)_*(\widehat{qrs})_* V,k^{\widehat{qr}\widehat{s}}\gamma(qr,s)]} \\
	{q_* U_{r,s}\cdot[\widehat{q}_*(\widehat{rs})_* V,k^{\widehat{q}\widehat{rs}}\gamma(q,rs)]} & {U_{q,r}\cdot U_{qr,s}\cdot[(\widehat{qrs})_* V,k^{\widehat{qrs}}]} \\
	{q_* U_{r,s}\cdot[(\widehat{q}\widehat{rs})_* V,k^{\widehat{q}\widehat{rs}}\gamma(q,rs)]} \\
	{q_* U_{r,s}\cdot[\gamma(q,rs)_*(\widehat{qrs})_* V,k^{\widehat{q}\widehat{rs}}\gamma(q,rs)]} & {q_* U_{r,s}\cdot U_{q,rs}\cdot[(\widehat{qrs})_* V,k^{\widehat{qrs}}]}
	\arrow["{\text{id}}", Rightarrow, from=1-1, to=1-2]
	\arrow["{\widehat{q}_*(\chi)}"', Rightarrow, from=1-1, to=2-1]
	\arrow["\chi", Rightarrow, from=1-2, to=2-2]
	\arrow["{\widehat{q}(\chi^{-1})}"', Rightarrow, from=2-1, to=3-1]
	\arrow["{\chi^{-1}}", Rightarrow, from=2-2, to=3-2]
	\arrow["{\text{id}}"', Rightarrow, from=3-1, to=4-1]
	\arrow["{\text{id}}", Rightarrow, from=3-2, to=4-2]
	\arrow["\chi"', Rightarrow, from=4-1, to=5-1]
	\arrow["{1\otimes\chi}", Rightarrow, from=4-2, to=5-2]
	\arrow["{\chi^{-1}}"', Rightarrow, from=5-1, to=6-1]
	\arrow["{1\otimes\chi^{-1}}", Rightarrow, from=5-2, to=6-2]
	\arrow["{\text{id}}"', Rightarrow, from=6-1, to=7-1]
	\arrow["{\text{id}}", Rightarrow, from=6-2, to=7-2]
	\arrow["{1\otimes\chi}"', Rightarrow, from=7-1, to=8-1]
	\arrow["{\text{id}}", Rightarrow, from=7-2, to=9-2]
	\arrow["{1\otimes\chi^{-1}}"', Rightarrow, from=8-1, to=9-1]
	\arrow["{\text{id}}"', Rightarrow, from=9-1, to=9-2]
\end{tikzcd}\]
Omitting the identity maps gives the diagram
\[\begin{tikzcd}[center picture]
	{[\widehat{q}_*\widehat{r}_*\widehat{s}_* V,k^{\widehat{q}\widehat{r}\widehat{s}}\gamma(r,s)^{\widehat{q}}\gamma(q,rs)]} & {[(\widehat{q}\widehat{r})_*\widehat{s}_* V,k^{\widehat{q}\widehat{r}\widehat{s}}\gamma(q,r)\gamma(qr,s)]} \\
	{[\widehat{q}_*(\widehat{r}\widehat{s})_* V,k^{\widehat{q}\widehat{r}\widehat{s}}\gamma(r,s)^{\widehat{q}}\gamma(q,rs)]} && {U_{q,r}\cdot[(\widehat{qr})_*\widehat{s}_* V,k^{\widehat{qr}\widehat{s}}\gamma(qr,s)]} \\
	{[\widehat{q}_*\gamma(r,s)_*(\widehat{rs})_* V,k^{\widehat{q}\widehat{r}\widehat{s}}\gamma(r,s)^{\widehat{q}}\gamma(q,rs)]} & {[(\widehat{q}\widehat{r}\widehat{s})_* V,k^{\widehat{q}\widehat{r}\widehat{s}}\gamma(r,s)^{\widehat{q}}\gamma(q,rs)]} \\
	{[(\widehat{q}\gamma(r,s))_*(\widehat{rs})_* V,\gamma(r,s)^{\widehat{q}}k^{\widehat{q}\widehat{rs}}\gamma(q,rs)]} && {U_{q,r}\cdot[(\widehat{qr}\widehat{s})_* V,k^{\widehat{qr}\widehat{s}}\gamma(qr,s)]} \\
	{q_* U_{r,s}\cdot[\widehat{q}_*(\widehat{rs})_* V,k^{\widehat{q}\widehat{rs}}\gamma(q,rs)]} & {q_* U_{r,s}\cdot[(\widehat{q}\widehat{rs})_* V,k^{\widehat{q}\widehat{rs}}\gamma(q,rs)]} & {q_* U_{r,s}\cdot U_{q,rs}\cdot[(\widehat{qrs})_* V,k^{\widehat{qrs}}]}
	\arrow["\chi", Rightarrow, from=1-1, to=1-2]
	\arrow["{\widehat{q}_*(\chi)}"', Rightarrow, from=1-1, to=2-1]
	\arrow["{\chi^{-1}}", Rightarrow, from=1-2, to=2-3]
	\arrow["\chi", Rightarrow, from=1-2, to=3-2]
	\arrow["{\widehat{q}(\chi^{-1})}"', Rightarrow, from=2-1, to=3-1]
	\arrow["\chi", Rightarrow, from=2-1, to=3-2]
	\arrow["{1\otimes\chi}", Rightarrow, from=2-3, to=4-3]
	\arrow["\chi"', Rightarrow, from=3-1, to=4-1]
	\arrow["{\chi^{-1}}"{pos=0.3}, Rightarrow, from=3-2, to=4-1]
	\arrow["{\chi^{-1}}", Rightarrow, from=3-2, to=4-3]
	\arrow["{\chi^{-1}}", Rightarrow, from=3-2, to=5-2]
	\arrow["{\chi^{-1}}"', Rightarrow, from=4-1, to=5-1]
	\arrow["{1\otimes\chi^{-1}}", Rightarrow, from=4-3, to=5-3]
	\arrow["{1\otimes\chi}", Rightarrow, from=5-1, to=5-2]
	\arrow["{1\otimes\chi^{-1}}", Rightarrow, from=5-2, to=5-3]
\end{tikzcd}\]
where it is clear that each ``square'' commutes. Every morphism in diagram (\ref{Omega axiom}) is known to have coefficients given by associators ($\omega$) and again by Mac Lane \cite{Maclane1971-MACCFT} the two scalars on either side of the diagram must agree. Therefore (\ref{can}) commutes and $\Omega$ has the structure of an invertible modification.
\epp

We now have the necessary tools to prove theorem (\ref{theorem1}).
\bp The map $q\mapsto q_*$ defines a functor from the underlying monoidal category of $3\Gr(Q,\pi)$ to the underlying monoidal category of $\Aut(\BK)$. We have constructed pseudonatural isomorphisms
\[(U_{q,r},\chit_{q,r}):q_*r_*\Rightarrow (qr)_*\]
for all $q,r\in Q$ and invertible modifications
\[\Omega_{q,r,s}:(\alpha_{q,r,s})_*\circ \chit_{q,rs}\circ (\id_{q_*}\otimes \chit_{r,s})\Rrightarrow \chit_{qr,s}\circ (\chit_{q,r}\otimes \id_{s_*})\]
for all $q,r,s\in Q$. Finally, we must check that all trihomomorphism axioms (that apply\footnote{i.e. no source and target considerations nor unitor axioms apply}) are satisfied. The lone trihomomorphism axiom that must be checked is for all 1-cells $q,r,s,t\in 3\Gr(Q,\pi)$ the following equation of modifications must hold (where some subscripts and tensor products are omitted and we refer to the identity transform and modification as 1 and we color the modifications for clarity):

\[\begin{tikzcd}[scale cd=1.25, center picture]
	&& {\Big(q\big(r(st)\big)\Big)_*} \\
	& {q_*\big(r(st)\big)_*} & {q_*((rs)t)_*} & {\Big(q\big((rs)t\big)\Big)_*} \\
	{q_*\big(r_*(st)_*\big)} && {q_*\big((rs)_*t_*\big)} && {\Big(\big(q(rs)\big)t\Big)_*} \\
	{q_*(r_*(s_*t_*))} & {q_*\big((r_*s_*)t_*\big)} & {\big(q_*(rs)_*\big)t_*} & {\big(q(rs)\big)_*t_*} & {\Big(\big((qr)s\big)t\Big)_*} \\
	{(q_*r_*)(s_*t_*)} & {\big(q_*(r_*s_*)\big)t_*} &&& {\big((qr)s\big)_*t_*} \\
	& {\big((q_*r_*)s_*\big)t_*} && {\big((qr)_*s_*\big)t_*} \\
	\\
	&& {\Big(q\big(r(st)\big)\Big)_*} \\
	& {q_*\big(r(st)\big)_*} && {\Big(q\big((rs)t\big)\Big)_*} \\
	{q_*\big(r_*(st)_*\big)} && {\big((qr)(st)\big)_*} && {\Big(\big(q(rs)\big)t\Big)_*} \\
	{q_*(r_*(s_*t_*))} & {(q_*r_*)(st)_*} & {(qr)_*(st)_*} && {\Big(\big((qr)s\big)t\Big)_*} \\
	{(q_*r_*)(s_*t_*)} && {(qr)_*(s_*t_*)} && {\big((qr)s\big)_*t_*} \\
	& {\big((q_*r_*)s_*\big)t_*} && {\big((qr)_*s_*\big)t_*}
	\arrow[""{name=0, anchor=center, inner sep=0}, "{(1\alpha_{r,s,t})_*}", Rightarrow, from=1-3, to=2-4]
	\arrow[""{name=1, anchor=center, inner sep=0}, "\chi", Rightarrow, from=2-2, to=1-3]
	\arrow["{1(\alpha_{r,s,t})_*}"', Rightarrow, from=2-2, to=2-3]
	\arrow["{1\Omega_{r,s,t}}"', color={rgb,255:red,214;green,92;blue,92}, shorten <=13pt, shorten >=13pt, Rightarrow, scaling nfold=3, from=2-2, to=4-2]
	\arrow["\chi", Rightarrow, from=2-3, to=2-4]
	\arrow["{(\alpha_{q,rs,t})_*}", Rightarrow, from=2-4, to=3-5]
	\arrow["{\Omega_{q,rs,t}}"', color={rgb,255:red,214;green,92;blue,92}, shorten <=12pt, shorten >=12pt, Rightarrow, scaling nfold=3, from=2-4, to=4-4]
	\arrow["{1\chi}"{description}, Rightarrow, from=3-1, to=2-2]
	\arrow["{1\chi}", Rightarrow, from=3-3, to=2-3]
	\arrow[Rightarrow, no head, from=3-3, to=4-3]
	\arrow["{(\alpha_{q,r,s}1)_*}", Rightarrow, from=3-5, to=4-5]
	\arrow["{1(1\chi)}"', Rightarrow, from=4-1, to=3-1]
	\arrow[Rightarrow, no head, from=4-1, to=4-2]
	\arrow[""{name=2, anchor=center, inner sep=0}, Rightarrow, no head, from=4-1, to=5-1]
	\arrow["{1(\chi 1)}", Rightarrow, from=4-2, to=3-3]
	\arrow["{=}"{marking, allow upside down}, draw=none, from=4-2, to=4-3]
	\arrow[""{name=3, anchor=center, inner sep=0}, Rightarrow, no head, from=4-2, to=5-2]
	\arrow["{\chi 1}", Rightarrow, from=4-3, to=4-4]
	\arrow["\chi", Rightarrow, from=4-4, to=3-5]
	\arrow["{=}"{marking, allow upside down}, draw=none, from=4-4, to=4-5]
	\arrow["{(\alpha_{q,r,s})_* 1}"', Rightarrow, from=4-4, to=5-5]
	\arrow[Rightarrow, no head, from=5-1, to=6-2]
	\arrow["{(1\chi)1}"', Rightarrow, from=5-2, to=4-3]
	\arrow[Rightarrow, no head, from=5-2, to=6-2]
	\arrow["\chi"', Rightarrow, from=5-5, to=4-5]
	\arrow[""{name=4, anchor=center, inner sep=0}, "{(\chi 1)1}"', Rightarrow, from=6-2, to=6-4]
	\arrow["{\chi 1}"', Rightarrow, from=6-4, to=5-5]
	\arrow["{(1\alpha_{r,s,t})_*}", Rightarrow, from=8-3, to=9-4]
	\arrow["{(\alpha_{q,r,st})_*}", Rightarrow, from=8-3, to=10-3]
	\arrow["\chi", Rightarrow, from=9-2, to=8-3]
	\arrow["{\Omega_{q,r,st}}", color={rgb,255:red,214;green,92;blue,92}, shorten <=13pt, shorten >=13pt, Rightarrow, scaling nfold=3, from=9-2, to=11-2]
	\arrow["{(\alpha_{q,rs,t})_*}", Rightarrow, from=9-4, to=10-5]
	\arrow["{1\chi}", Rightarrow, from=10-1, to=9-2]
	\arrow[""{name=5, anchor=center, inner sep=0}, "{(\alpha_{qr,s,t})_*}", Rightarrow, from=10-3, to=11-5]
	\arrow["{(\alpha_{q,r,s}1)_*}", Rightarrow, from=10-5, to=11-5]
	\arrow["{1(1\chi)}", Rightarrow, from=11-1, to=10-1]
	\arrow["{=}"{description}, draw=none, from=11-1, to=11-2]
	\arrow[Rightarrow, no head, from=11-2, to=10-1]
	\arrow["\chi1", Rightarrow, from=11-2, to=11-3]
	\arrow["\chi", Rightarrow, from=11-3, to=10-3]
	\arrow[Rightarrow, no head, from=12-1, to=11-1]
	\arrow[""{name=6, anchor=center, inner sep=0}, "{1\chi}"', Rightarrow, from=12-1, to=11-2]
	\arrow["\chi1"', Rightarrow, from=12-1, to=12-3]
	\arrow[""{name=7, anchor=center, inner sep=0}, Rightarrow, no head, from=12-1, to=13-2]
	\arrow[""{name=8, anchor=center, inner sep=0}, "{1\chi}", Rightarrow, from=12-3, to=11-3]
	\arrow[""{name=9, anchor=center, inner sep=0}, Rightarrow, no head, from=12-3, to=13-4]
	\arrow["\chi"', Rightarrow, from=12-5, to=11-5]
	\arrow["{(\chi1)1}"', Rightarrow, from=13-2, to=13-4]
	\arrow["\chi1"', Rightarrow, from=13-4, to=12-5]
	\arrow["{=}"{description}, draw=none, from=1, to=0]
	\arrow["{=}"{description}, draw=none, from=2, to=3]
	\arrow["{\Omega_{q,r,s} 1}", color={rgb,255:red,214;green,92;blue,92}, shorten <=15pt, shorten >=15pt, Rightarrow, scaling nfold=3, from=4-3, to=4]
	\arrow["{=}"{marking, allow upside down, pos=0.6}, draw=none, from=4, to=8-3]
	\arrow["{(\pi_{q,r,s,t})_*}"'{pos=0.4}, color={rgb,255:red,92;green,92;blue,214}, shorten <=8pt, shorten >=13pt, Rightarrow, scaling nfold=3, from=9-4, to=5]
	\arrow["{\Omega_{qr,s,t}}"', color={rgb,255:red,214;green,92;blue,92}, shorten <=19pt, shorten >=19pt, Rightarrow, scaling nfold=3, from=5, to=13-4]
	\arrow["{=}"{description}, draw=none, from=6, to=8]
	\arrow["{=}"{description}, draw=none, from=7, to=9]
\end{tikzcd}\]
\newpage

By the functorality of $*$ we have that $(\pi_{q,r,s,t})_*$ is $\pi(q,r,s,t)\id$, where $\id$ is the identity modification, in $\Aut(\BK)$. By definition of $\Omega$, the above equality of modifications holds if and only if 
\[\omega(\ar,\s,\tee)\omega(\q,\ar\s,\tee)\omega(\q,\ar,\s)=\pi(q,r,s,t)\omega(\q,\ar,\s\tee)\omega(\q\ar,\s,\tee)\]
but $\rho(\q)=q$, $\rho(\ar)=r$, $\rho(\s)=s$ and $\rho(\tee)=t$ by definition of the lift function. Therefore the above equality holds by equation (\ref{eq2.1}).
\epp

We now investigate concrete examples of anomalous actions of groups on tensor categories using our main theorem.

\section{Examples}
\subsection{Free groups}
Our first class of examples, the easiest of which to apply our main theorem, involves free groups. If we denote $\F_n$ the free group with $n$ generators, then we have $H^k(\F_n,\kt)=1$ for all $k\geq 2$. For any finitely generated $Q$ and $\pi\in Z^4(Q,\kt)$, we can find a surjection $\rho:\F_n\rightarrow Q$ which is completely determined by picking $n$ elements in $Q$ which generate. The kernel of $\rho$ is a free normal subgroup $K\leq \F_n$, with $K\cong \F_m$ with $m$ possibly $\infty$. By \cite{lyndon2001combinatorial} (proposition 3.9), if $|Q|<\infty$ and $n>1$, then we have
\[m=|Q|(n-1)+1.\]
Since $H^4(\F_n,\kt)=1=H^3(\F_m,\kt)$, we can find a trivialization $\omega\in C^3(\F_n,\kt)$ of $\rho^*(\pi)$ with $\omega|_{K\cong\F_m}=1$. Applying the main theorem gives the following:
\begin{corollary}
    Let $Q$ be a finite group. Pick a surjection $\rho:\F_n\rightarrow Q$ with $n>1$. Now, given an $\F_n$-action on a tensor category $\mathcal{C}$ and $\pi\in Z^4(Q,\kt)$ then there exists a $\pi$-anomalous $Q$ action on $\mathcal{C}\rtimes \F_m$ with $m=|Q|(n-1)+1$.
\end{corollary}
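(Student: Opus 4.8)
The plan is to obtain the corollary as an essentially immediate consequence of Theorem \ref{theorem1}, feeding it $G=\F_n$ together with a carefully chosen trivialization of $\rho^*(\pi)$. Concretely, I must supply the four pieces of data in the hypothesis of Theorem \ref{theorem1}: the group $Q$ and cocycle $\pi$ are given; the group $G=\F_n$ and the surjection $\rho:\F_n\to Q$ are given; the $\F_n$-action on $\mathcal{C}$ is given; so the only thing left to produce is a normalized $\omega\in C^3(\F_n,\kt)$ with $d\omega=\rho^*(\pi)$, and for the conclusion to read $\mathcal{C}\rtimes\F_m$ rather than a twisted crossed product $\mathcal{C}\rtimes_{\omega|_K}\F_m$ I additionally want $\omega$ to restrict to the trivial cochain on $K=\ker\rho$.

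First I would identify $K$. Since $Q$ is finite, $K$ has index $|Q|$ in the finitely generated group $\F_n$, so $K$ is finitely generated; by the Nielsen--Schreier theorem it is free, and the Schreier index formula (\cite{lyndon2001combinatorial}, Proposition 3.9) computes its rank to be $1+|Q|(n-1)=m$. Thus $K\cong\F_m$, and the hypothesis $n>1$ ensures $m\geq 2$.

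Next I would construct $\omega$. Since $H^4(\F_n,\kt)=1$ (free groups have cohomological dimension $1$), the cocycle $\rho^*(\pi)\in Z^4(\F_n,\kt)$ is a coboundary; fix any normalized $\omega_0$ with $d\omega_0=\rho^*(\pi)$. By (\ref{eq2.2}) --- which uses only that $\pi$ is normalized and $\rho(K)=\{1\}$ --- the restriction $\rho^*(\pi)|_K$ is the trivial cochain, hence $\omega_0|_K\in Z^3(K,\kt)=Z^3(\F_m,\kt)$. As $H^3(\F_m,\kt)=1$ as well, write $\omega_0|_K=d\eta$ with $\eta\in C^2(K,\kt)$ normalized, extend $\eta$ to a normalized cochain $\tilde\eta\in C^2(\F_n,\kt)$ (for instance by setting $\tilde\eta$ equal to $\eta$ on $K\times K$ and equal to $1$ elsewhere, which is consistent because $\eta$ already vanishes whenever an argument equals $1_K=1_{\F_n}$), and put $\omega:=\omega_0\,(d\tilde\eta)^{-1}$. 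Then $d\omega=d\omega_0=\rho^*(\pi)$, while, since restriction commutes with the differential and $\tilde\eta|_K=\eta$, we get $\omega|_K=(\omega_0|_K)(d\eta)^{-1}=1$.

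With this data in hand, Theorem \ref{theorem1} produces a $\pi$-anomalous action of $Q$ on $\mathcal{C}\rtimes_{\omega|_K}K$; since $\omega|_K=1$ the twist is trivial and this category is exactly $\mathcal{C}\rtimes\F_m$, which is the claim. There is no genuine obstacle here --- the corollary is really just a packaging of Theorem \ref{theorem1} --- and the only step warranting a sentence of care is the choice of $\omega$ with trivial restriction to $K$, i.e. the cochain-extension argument of the previous paragraph; everything else is a direct verification that the hypotheses of the main theorem are met.
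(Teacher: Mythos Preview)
Your proposal is correct and follows essentially the same approach as the paper: identify $K\cong\F_m$ via Nielsen--Schreier and the Schreier index formula, use the vanishing of $H^4(\F_n,\kt)$ and $H^3(\F_m,\kt)$ to produce a trivialization $\omega$ of $\rho^*(\pi)$ that restricts trivially to $K$, and then invoke Theorem~\ref{theorem1}. The paper simply asserts that such an $\omega$ exists, whereas you spell out the cochain-extension argument explicitly; otherwise the arguments coincide.
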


\begin{exmp}
    Let $\F_n$ be a free group with $n>1$ and let $\{f_i\}$ be a set of homeomorphisms of the topological space $X$. Then we freely define an action of $\F_n$ on $X$ by sending generators of $\F_n$ to the set $\{f_i\}$. Now $\F_n$ acts on the category of vector bundles over $X$, $\text{Vect}(X)$ (using the terminology of \cite{Hatcher} we take $\text{Vect}(X)$ to be the restriction of the category of fiber bundles over a space $X$, referred to as $\text{bun}/X$ in \cite{alma99442987900561}, to vector bundles over a space $X$). By the previous corollary, for a finite group $Q$ with surjection $\rho:\F_n\rightarrow Q$ and $\pi\in Z^4(Q,\kt)$ we have a $\pi$-anomalous $Q$ action on $\text{Vect}(X)\rtimes \F_m$.
\end{exmp}

\subsection{Finite groups}
We have found that the easiest method for creating examples involves cases when we are able to decompose non-trivial 4-cocycles into a cup product of two 2-cocycles or a 2-cocycle and two 1-cocycles.

Consider the short exact sequence
\[0\rightarrow(\F_p,+)\hookrightarrow (\C^{\times},\times)\twoheadrightarrow (\C^{\times},\times)\rightarrow 0\]
where we identify $\F_p\cong \Z_p$ with the $p$-roots of unity in $\C^{\times}$. Then we have the long exact sequence
\[\cdots\rightarrow H^n(Q,\C^{\times})\xrightarrow{p}H^n(Q,\C^{\times})\xrightarrow{\delta}H^{n+1}(Q,\F_p)\rightarrow H^{n+1}(Q,\C^{\times})\rightarrow \cdots\]
We want to produce non-trivial 4-cocycles $\pi\in Z^4(Q,\C^{\times})$ while using 4-cocycles from $Z^4(Q,\F_p)$ in our construction, hence we want $H^4(Q,\F_p)\rightarrow H^4(Q,\C^{\times})$ to be injective.
We then need that $p$ is coprime to $|H^3(Q,\C^{\times})|$. The ring structure of $\F_p$ allows the use of cup products. If we can find $\pi\in Z^4(Q,\F_p)$ that decomposes into a cup product $\pi=\sigma c$ where $\sigma\in Z^2(Q,\F_p)$ then we can always construct the surjection 
\[G:=\F_p\rtimes_{\sigma} Q\xrightarrow{\rho} Q.\]
We can then directly trivialize any 2-cocycle $\rho^*(\sigma)=0$ and thus 
\[\rho^*(\pi)=\rho^*(\sigma c)=\rho^*(\sigma)\rho^*(c)=0\]
by the properties of the cup product on cohomology.
\begin{exmp}
    By \cite{Gannon_2019}, there is exactly one non-trivial class in $H^5(S_4,\Z)\cong H^4(S_4,\C^{\times})$. 

    Consider the short exact sequence
    \[0\rightarrow \Z \xrightarrow{\times 2}\Z \xrightarrow{\pi}\F_2\rightarrow 0\]
    that gives the long exact sequence in cohomology
    \[\cdots \rightarrow H^k(Q,\Z)\xrightarrow{\times 2}H^k(Q,\Z)\xrightarrow{\pi}H^k(Q,\Z_2)\xrightarrow{\beta}H^{k+1}(Q,\Z)\rightarrow\cdots\]

    The cohomology ring $H^*(S_4,\F_2)$ is $\F_2[\sigma_1,\sigma_2,c_3]/(\sigma_1 c_3)$ where subscripts denote degree and so $H^4(S_4,\F_2)$ is spanned by $\sigma_1^4,\sigma_1^2\sigma_2$ and $\sigma_2^2$.
    The unique non-trivial class in $H^5(S_4,\Z)$ can be identified (through the isomorphism $H^5(S_4,\Z)\cong Sq^1(Z^4(S_4,\F_2))$) with the element $Sq^1(\sigma_1^2\sigma_2)=\sigma_1^3\sigma_2$, where $Sq^1:H^k(Q,\F_2)\rightarrow H^{k+1}(Q,\F_2)$ denotes the \textit{Steenrod square} (see page 11 of \cite{Gannon_2019} for more details.) We will also identify, through the isomorphism $H^5(S_4,\Z)\cong H^4(S_4,\C^\times)$, the 4-cocycle $\tilde{\pi}\in H^4(S_4,\C^\times)$ with the element $Sq^1(\sigma_1^2\sigma_2)$. The element $\sigma_2\in Z^2(S_4,\F_2)$ is our twist
    \[\Z_2\rtimes_{\sigma_2} S_4\xrightarrow{\rho} S_4.\]
    We can directly trivialize $\rho^*(\sigma_2)\in H^2(\Z_2\rtimes_{\sigma_2} S_4,\F_2)$, i.e., define $c:\Z_2\rtimes_{\sigma_2}S_4\rightarrow \F_2$ by
    \[c(a,b):=a\]
    for $a\in \Z_2$ and $b\in S_4$. A direct calculation shows that $\delta c=\rho^* (\sigma_2)$ and so 
    \[\rho^*(\sigma_1^3 \sigma_2)=0.\]

    Now we would like to build an action of $\Z_2\rtimes_{\sigma_2}S_4$ on a tensor category $\mathcal{C}$. For any symmetric group $S_n$, we have the standard categorical permutation action on $\mathcal{C}^{\boxtimes n}$. We also can, in general, twist an action $G\rightarrow \aut (\mathcal{C})$ by a 2-cocycle $c\in Z^2(G,\aut(1_{\mathcal{C}}))$ where $\aut(1_{\mathcal{C}})$ is the group of tensor automorphisms of the identity functor in $\mathcal{C}$. By \cite{etingof2016tensor} (proposition 4.14.3), we have a canonical isomorphism between $\aut(1_{\mathcal{C}})$ and characters of the grading group of $\mathcal{C}$. Therefore if the grading group has $\Z_2$ as a subgroup, for instance $\Z_2$ itself, then $\sigma_2$ is such a 2-cocycle. The twist is given by the composition
    \[g_*\circ h_*\cong (gh)_*\xrightarrow{\sigma_2(g,h)1_{(gh)_*}}(gh)_*\]
    Therefore if we have an $H$-graded fusion category $\mathcal{C}$, where $\Z_2\leq H$,  (for instance a Tambara-Yamagami category \cite{Tambara1998TensorCW}), then we have a $\sigma_2$-twisted permutation action of $\Z_2\rtimes_{\sigma_2} S_4$ on $\mathcal{C}^{\boxtimes 4}$. Thus if we pick a trivialization $d\omega=\rho^*(\sigma_1^3\sigma_2)$ we can build a $\tilde{\pi}$-anomalous action of $S_4$ on $\mathcal{C}^{\boxtimes 4}\rtimes_{\omega} \Z_2$. 
\end{exmp}

\newpage

Further generalizing this example we may say:
\begin{itemize}
    \item  given any group $G=\Z_n\rtimes_c Q$ with $\pi\in Z^4(Q,\Z_n)$ and a decomposition of $\pi=cc'$ where $c\in Z^2(Q,\Z_n)$,
    \item a tensor category $\mathcal{C}$ that is $\Gamma$-graded such that $\Z_n\leq \Gamma$
    \item an action of $G$ on the tensor category $\mathcal{C}$, $\underline{G}\rightarrow\aut(\mathcal{C})$,
\end{itemize}
then we may always twist the $G$-action by $c$ and therefore have a new action of $G$ on $\mathcal{C}$, which then (after picking a trivialization $d\omega=\rho^*(\pi)$) gives a $\pi$-anomalous action of $Q$ on $\mathcal{C}\rtimes_{\omega}\Z_n$.

\bibliographystyle{abbrv}
\bibliography{bib.bib}
\end{document}